\newtheorem{theorem}{Theorem}[section]
\newtheorem{definition}{Definition}[section]
\newtheorem{lemma}{Lemma}[section]
\newtheorem{remark}{Remark}[section]
\begin{document}
\title{
\vspace{0.5in} {\bf\Large  Invariant manifolds for random and
stochastic  partial differential equations}}
\author{{\bf\large Tom\'as Caraballo}\footnote{The author acknowledges the support
Ministerio de Educaci\'on y Ciencia (Spain) BFM2002-03068 and
MTM2005-01412}\hspace{2mm}
\vspace{1mm}\\
{\it\small Dpto. Ecuaciones Diferenciales y An\'alisis
Num\'erico},\\ {\it\small Universidad de Sevilla}, {\it\small
Apdo. de Correos 1160,
41080-Sevilla, Spain}\\
{\it\small e-mail: caraball@us.es}\vspace{1mm}\\
{\bf\large Jinqiao Duan}\footnote{The author acknowledges the
support NSF 0620539}\hspace{2mm}
\vspace{1mm}\\
{\it\small Department of Applied Mathematics},\\ {\it\small Illinois Institute of Technology}, {\it\small Chicago, IL 60616, USA}\\
{\it\small e-mail: duan@iit.edu}\vspace{1mm}\\
{\bf\large Kening Lu}\footnote{The author acknowledges the support
NSF DMS 0200961 and  NSF DMS
0401708}\vspace{1mm}\vspace{1mm}\\
{\it\small Department of Mathematics},\\ {\it\small Brigham Young University}, {\it\small Provo, Utah 84602, USA}\\
{\it\small e-mail: klu@math.byu.edu}\vspace{1mm}\\
{\bf\large Bj{\"o}rn Schmalfu{\ss}}\footnote{The author
acknowledges the support DFG17355596}\vspace{1mm}\\
{\it\small Institut f{\"u}r Mathematik},\\ {\it\small
Universit{\"a}t Paderborn}, {\it\small 33098 Paderborn,
  Germany}\\
{\it\small e-mail: schmalfuss@math.upb.de}\vspace{1mm}}

\maketitle

\newpage

\begin{center}
{\bf\small Abstract}

\vspace{3mm} \hspace{.05in}\parbox{4.5in} {{\small Random
invariant manifolds  are   geometric objects useful for
understanding complex dynamics under stochastic influences. Under
a nonuniform hyperbolicity or a nonuniform exponential dichotomy
condition,  the existence of random pseudo-stable and
pseudo-unstable manifolds for a class of \emph{random} partial
differential equations  and \emph{stochastic} partial differential
equations is shown. Unlike the invariant manifold theory for
stochastic \emph{ordinary} differential equations, random norms
are not used. The result is then applied to a nonlinear stochastic
partial differential equation with linear multiplicative noise. }}
\end{center}

\noindent
{\it \footnotesize 2000 Mathematics Subject Classification}. {\scriptsize Primary: 37L55, 35R60;  Secondary: 58B99, 35L20}.\\
{\it \footnotesize Key words}. {\scriptsize Stochastic PDEs,
random PDEs, multiplicative ergodic theorem, random dynamical
systems, nonuniform hyperbolicity,  invariant manifolds.}

   \newcommand{\eps}{\varepsilon}
   \newcommand{\Wsob}{\smash{{\stackrel{\circ}{W}}}_2^1(D)}
   \newcommand{\EX}{{\Bbb{E}}}
   \newcommand{\PX}{{\Bbb{P}}}
   \parindent0mm

\renewcommand{\k}{\kappa}
\newcommand{\e}{\epsilon}
\renewcommand{\a}{\alpha}
\renewcommand{\b}{\beta}
\newcommand{\om}{\omega}
\newcommand{\Om}{\Omega}
\newcommand{\D}{\Delta}
\newcommand{\p}{\partial}
\newcommand{\de}{\delta}
\renewcommand{\phi}{\varphi}
\newcommand{\N}{{\mathbb N}}
\newcommand{\R}{{\mathbb R}}
\newcommand{\cF}{{\cal F}}

\setcounter{secnumdepth}{5} \setcounter{tocdepth}{5}

\makeatletter
    \newcommand\figcaption{\def\@captype{figure}\caption}
    \newcommand\tabcaption{\def\@captype{table}\caption}
\makeatother



\section{\bf Introduction}

Invariant structures in state spaces are   essential for
describing and understanding dynamical behavior of nonlinear
random systems. For random dynamical systems, these invariant
structures are usually random geometric objects.  Stable,
unstable,  center,    and inertial manifolds, as special random
invariant structures, have been considered in the investigation of
stochastic partial differential equations or stochastic
evolutionary equations in infinite dimensional spaces
\cite{CarLanRob01, DLS1, DLS2, Mohzhazha03, BenFla95, GirChu95,
DaPDeb96, WangDuan}. More detailed historical account of this
subject may be found in \cite{CarLanRob01, DLS1}.

\medskip

In this paper, we are concerned with invariant stable or unstable
manifolds for infinite dimensional random dynamical systems,
especially those systems generated by {\em stochastic} or {\em
random} partial differential equations (SPDEs or RPDEs), under
some weak conditions. Our approach for establishing invariant
manifolds for infinite dimensional {\em random} dynamical systems
is based on a nonuniform  exponential dichotomy, also called
nonuniform pseudo-hyperbolicity, for the linearized random
dynamical systems. When a multiplicative ergodic theorem  (MET)
holds \cite{Rue82, LianLu}, nonuniform pseudo-hyperbolicity also
holds. Moreover, unlike the invariant manifolds theory for finite
dimensional random dynamical systems \cite{Wanner, Arn98}, we make
no use of random norms. To be more precise, the structure of our
analysis is the following. Before proving the existence of
invariant manifolds for a nonlinear (random or stochastic) partial
differential equation (PDE),
 we analyse the linear system as a first
approximation. We prove that the fundamental solutions of our
linear PDE generates a random dynamical system that is linear and
compact (for every positive time $t$). The partial differential
operator generating this equation is supposed to be uniformly
elliptic and random. The long-time behaviour of this linear random
dynamical system is analysed under a nonuniform
pseudo-hyperbolicity  condition, which also implies an exponential
dichotomy result. We then use a cut-off procedure to obtain the
existence of local (pseudo) invariant stable and unstable
manifolds for nonlinear random systems by using the
Lyapunov-Perron technique.

\medskip

The paper is organized as follows.
  In Section \ref{s2}, we recall some  basic concepts for
random dynamical systems. In Section \ref{ED}, we discuss
multiplicative ergodic theorems and exponential dichotomies for
linear cocycles. We prove that when a multiplicative ergodic
theorem (MET) holds in an infinite dimensional Hilbert space, a
\emph{nonuniform} exponential dichotomy (i.e., nonuniform
pseudo-hyperbolicity)  also holds (Theorem \ref{t4}) in the same
Hilbert space. Furthermore, we obtain sufficient conditions under
which
  a stochastic partial differential equation generates a continuous random
dynamical system (Theorem \ref{cocycle}). We then prove
pseudo-stable and pseudo-unstable manifold theorems  for random
and stochastic partial differential equations (Theorems
\ref{thm3.1} and \ref{thm3.2}), under nonuniform
pseudo-hyperbolicity (see Definition \ref{hyperbolic}), in Section
\ref{IM}. Finally, in Section \ref{appl}, we demonstrate our
invariant manifold theorem for an example of  stochastic partial
differential equations.

\section{\bf Random dynamical systems}\label{s2}

We now recall some basic concepts in random dynamical systems.
 First we introduce  an appropriate model for a noise. Such a
model is given by a metric dynamical system defined by a quadrupel
$(\Omega,\mathcal{F},\mathbb{P},\theta)$, where
$(\Omega,\mathcal{F},\mathbb{P})$ is a probability space and
$\theta$ is   a measurable flow with time set $\mathbb{T}$ being
$\mathbb{R}$ or $\mathbb{Z}$:
\begin{align*}
    \theta: (\mathbb{T}\times
    \Omega,\mathcal{B}(\mathbb{T})\otimes\mathcal{F})\to
    (\Omega,\mathcal{F}).
\end{align*}
For the partial mappings $\theta(t,\cdot)$ we use the symbol
$\theta_t$. We then have
\begin{equation*}
    \theta_t\circ
    \theta_\tau=:\theta_t\theta_\tau=\theta_{t+\tau}\quad\text{ for
    }t,\,\tau\in\mathbb{T},\qquad \theta_0={\rm id}_\Omega.
\end{equation*}
The measure $\mathbb{P}$ is taken to be ergodic with respect
to the {\em shift} operators $\theta_t$; see \cite{Boxler}.\\
 The standard example
for a metric dynamical system is induced by the Brownian motion.
Let $V$ be a separable Hilbert space and let $C_0(\mathbb{R},V)$
be the set of continuous functions on $\mathbb{R}$ with values in
$U$ which are zero at zero equipped with the compact open
topology. We denote by $\mathcal{F}$ the associated
Borel-$\sigma$-algebra. Let $\mathbb{P}$ be the Wiener measure on
$\mathcal{F}$ which is given by the distribution of a two-sided
Wiener process with trajectories in $C_0(\mathbb{R},V)$. For the
definition of a two-sided Wiener process see Arnold \cite{Arn98}
Page 547. The flow $\theta$ is given by the Wiener shifts
\begin{equation*}
    \theta_t\omega(\cdot)=\omega(\cdot+t)-\omega(t),\quad
    t\in\mathbb{R},\qquad \omega\in\Omega=C_0(\mathbb{R},V).
\end{equation*}
In this case the measure $\mathbb{P}$ is {\em ergodic} with
respect to the flow $\theta$.

\medskip

For some Polish space (complete separable metric space) $H$ a
random dynamical system is given by a mapping
\begin{equation*}
    \phi:
    (\mathbb{T}^+\times\Omega\times H,\mathcal{B}(\mathbb{T}^+)\otimes
    \mathcal{F}\otimes \mathcal{B}(H))\to (H,\mathcal{B}(H))
\end{equation*}
which has the {\em cocycle} property:
\begin{align}\label{eq1}
\begin{split}
    \phi(t+\tau,\omega,x)&=\phi(t,\theta_\tau\omega,\phi(\tau,\omega,x)),\qquad
    t,\,\tau\in\mathbb{T}^+,\quad \omega\in\Omega,\\
    \phi(0,\omega,x)&=x.
    \end{split}
\end{align}
Cocycles are generalizations of semigroups reflecting some
non-autonomous dynamics.

Suppose that for some flow $\theta$ the differential equation
\begin{equation*}
    u^\prime=f(\theta_t\omega,u),\quad u(0)=x\in H
\end{equation*}
has a unique solution on any interval $[0,T]$ for $T>0$. Then the
solution mapping $(t,\omega,x)\to\phi(t,\omega,x)$ defines a
cocycle. If this operator depends measurably on its variables then
$\phi$ defines
a random dynamical system.\\

In what follows we have to transform one random dynamical system
into another. To do this we need the following lemma.
\begin{lemma}\label{lm1}
Consider  the mapping
\begin{align*}
    &T:\Omega\times H\to H,
\end{align*}
and assume that $T(\omega,\cdot)$ is a homeomorphism for any
$\omega\in\Omega$, and $T(\cdot,x),\,T(\cdot,x)^{-1}$ are
measurable for any $x\in H$. If $\phi$ is a continuous random
dynamical system, then so is $\psi$ defined by
\begin{equation*}
    \psi(t,\omega,x):=T(\theta_t\omega,\phi(t,\omega,T(\omega,x)^{-1}).
\end{equation*}
\end{lemma}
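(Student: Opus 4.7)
The plan is to verify three things: (i) the cocycle identity together with the initial condition $\psi(0,\omega,x)=x$, (ii) joint measurability of $\psi$ as a map on $\mathbb{T}^+\times\Omega\times H$, and (iii) continuity of $\psi(t,\omega,\cdot)$ in $x$ for each fixed $(t,\omega)$. I will read the notation $T(\omega,x)^{-1}$ as $T(\omega,\cdot)^{-1}(x)$, i.e.\ the inverse of the homeomorphism $T(\omega,\cdot)$ evaluated at $x$, since this is the only interpretation that produces an element of $H$.

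For (i), the initial condition is immediate: using $\phi(0,\omega,\cdot)=\mathrm{id}$ and the fact that $T(\omega,\cdot)$ is a homeomorphism,
\[
\psi(0,\omega,x)=T\bigl(\omega,\phi(0,\omega,T(\omega,\cdot)^{-1}(x))\bigr)=T\bigl(\omega,T(\omega,\cdot)^{-1}(x)\bigr)=x.
\]
The cocycle identity is a telescoping computation. Substituting the definition of $\psi(\tau,\omega,x)$ into $\psi(t,\theta_\tau\omega,\psi(\tau,\omega,x))$, the inner composition $T(\theta_\tau\omega,\cdot)^{-1}\circ T(\theta_\tau\omega,\cdot)$ collapses to the identity; then the cocycle property \eqref{eq1} of $\phi$ combines $\phi(t,\theta_\tau\omega,\cdot)\circ\phi(\tau,\omega,\cdot)$ into $\phi(t+\tau,\omega,\cdot)$; and $\theta_t\theta_\tau=\theta_{t+\tau}$ handles the outer $T$. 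This yields exactly $\psi(t+\tau,\omega,x)$.

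For (ii), the key tool is that a mapping from $\Omega\times H$ into a Polish space which is measurable in $\omega$ for each fixed $x$ and continuous in $x$ for each fixed $\omega$ (a Carath\'eodory map) is automatically jointly $\mathcal{F}\otimes\mathcal{B}(H)$-measurable. Both $T$ and $T^{-1}$ satisfy these hypotheses, the latter because the inverse of a homeomorphism is continuous and $T(\cdot,x)^{-1}$ is measurable by assumption. Combining this with measurability of $(t,\omega)\mapsto\theta_t\omega$ and of $\phi$, the composition defining $\psi$ is jointly measurable. For (iii), continuity of $\psi(t,\omega,\cdot)$ is just the composition of the three continuous maps $x\mapsto T(\omega,\cdot)^{-1}(x)$, $y\mapsto\phi(t,\omega,y)$, and $z\mapsto T(\theta_t\omega,z)$.

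I expect the one mildly delicate step to be the joint measurability in (ii): without first upgrading the separately-measurable-in-$\omega$, continuous-in-$x$ map $T$ to a genuinely jointly measurable map on the product $\sigma$-algebra, one cannot legitimately compose $T(\theta_t\omega,\cdot)$ with the measurable $\phi$ to get a measurable function of $(t,\omega,x)$. Once the Carath\'eodory upgrade is invoked for both $T$ and $T^{-1}$, every remaining ingredient of the proof is formal.
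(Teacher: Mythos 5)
Your proposal is correct and follows essentially the same approach as the paper: the paper's two-sentence proof declares the algebraic verification ``straightforward'' and isolates exactly the Carath\'eodory-type joint-measurability upgrade of $T$ and $T^{-1}$ (citing Castaing and Valadier, Lemma III.14) as the only substantive point, which is precisely the delicate step you identify and handle.
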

The proof is straightforward. We note that, by the assumptions of
the lemma, the mappings $T$ and $T^{-1}$ are measurable from
$\Omega\times H$ to $H$, see Castaing and Valadier \cite{CasVal77}
Lemma III.14.

\medskip

For our purpose, a class of random variables will be crucial. A
random variable
\begin{equation}\label{eqA5}
X:(\Omega,\mathcal{F})\to
(\mathbb{R}^+\setminus\{0\},\mathcal{B}(\mathbb{R}^+\setminus\{0\}))
\end{equation}
  is called {\em tempered} if

\begin{equation*}
    \lim_{t\to\infty}\frac{\log^+ X(\theta_t\omega)}{t}=0
\end{equation*}
for $\omega$ contained in a $\{\theta_t\}_{t\in\mathbb{R}}$
invariant set of full measure. Such a random variable $X$ is
called {\em tempered from below}  if $X^{-1}$ is tempered. We note
that, in the case of ergodicity, a random variable defined in
(\ref{eqA5}) is either tempered  or alternatively there exists a
$\{\theta_t\}_{t\in\mathbb{T}}$ invariant set $\tilde \Omega$ of
full measure such that
\begin{equation*}
    \limsup_{t\to\pm\infty}\frac{\log^+
    X(\theta_t\omega)}{t}=+\infty,\quad\omega\in\tilde\Omega.
\end{equation*}
A random variable is tempered if and only if there exists a
postive constant $\Lambda$ and a  positive random variable
$C_\Lambda(\omega)$ such that
\begin{equation}\label{eq8}
    X(\theta_t\omega)\le C_\Lambda(\omega)e^{\Lambda\,t}\quad
    \text{for}\quad t\in  \mathbb{T}
\end{equation}
for $\omega$ in some $\{\theta_t\}_{t\in\mathbb{T}}$ invariant set
$\tilde \Omega$ of full measure.\\


We need the following definitions and conclusions about the
measurability of linear operators.

Let $H_1,\,H_2$ be separable Hilbert spaces. A mapping $\omega\to
B(\omega)\in L(H_1,H_2)$ is called {\em strongly measurable} if
$\omega\to B(\omega)h$ is a random variable on $H_2$ for every
$h\in H_1$.

\begin{lemma}\label{l4}
Let $H_1,\,\,H_2,\,H_3$ be three separable Banach spaces.
Let $B$ be a strongly measurable operator in $L(H_1,H_2)$, and let $C$ be a strongly measurable operator in $L(H_2,H_3)$. Then\\
(i) $B:\Omega\times H_1\to B(\omega)h\in H_2$ is  measurable. \\
(ii) $C\circ B$ is strongly measurable in $L(H_1,H_3)$\\
(iii) $\omega\to \|B(\omega)\|_{L(H_1,H_2)}$ is measurable.\\
(iv) Let $\dot{H}_1$ a dense set in $H_1$ and suppose that
$\omega\to B(\omega)h$ is measurable for $h\in \dot{H}_1$. Then
$B$ is strongly measurable.
\end{lemma}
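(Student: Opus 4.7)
The plan is to prove the four parts in the order listed, since (ii)--(iv) reduce to routine consequences once the joint measurability in (i) is established. For (i), I would use a Carath\'eodory-type approximation exploiting the separability of $H_1$. Fix a countable dense set $\{h_k\}\subset H_1$ and, for each $n\in\mathbb{N}$, introduce the Borel partition of $H_1$ given by $A_{n,k}=\{h\in H_1 : k \text{ is the smallest index with } \|h-h_k\|_{H_1}<1/n\}$. The piecewise-constant map $\pi_n(h)=\sum_{k}h_k\,\mathbf{1}_{A_{n,k}}(h)$ is Borel measurable $H_1\to H_1$ and satisfies $\pi_n(h)\to h$ for every $h\in H_1$. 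Then
\begin{equation*}
(\omega,h)\longmapsto B(\omega)\pi_n(h)=\sum_{k} B(\omega)h_k\,\mathbf{1}_{A_{n,k}}(h)
\end{equation*}
is jointly measurable, being a countable sum of products of measurable functions of $\omega$ with Borel indicators of $h$. Since $B(\omega)\in L(H_1,H_2)$ is continuous for every fixed $\omega$, we have $B(\omega)\pi_n(h)\to B(\omega)h$ in $H_2$ as $n\to\infty$, and a pointwise limit of measurable maps into the separable metric space $H_2$ is measurable. This gives (i).

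For (ii), apply (i) to both $B$ and $C$. For any fixed $h\in H_1$ the map $\omega\mapsto (\omega,B(\omega)h)$ is measurable into $\Omega\times H_2$, and composing with the jointly measurable map $(\omega,g)\mapsto C(\omega)g$ from $\Omega\times H_2$ to $H_3$ produces a measurable function $\omega\mapsto C(\omega)B(\omega)h$, which is precisely strong measurability of $C\circ B$. For (iii), use separability of the closed unit ball $B_1\subset H_1$ to pick a countable dense subset $D\subset B_1$; then
\begin{equation*}
\|B(\omega)\|_{L(H_1,H_2)} = \sup_{h\in D}\|B(\omega)h\|_{H_2},
\end{equation*}
and each summand $\omega\mapsto\|B(\omega)h\|_{H_2}$ is measurable as the composition of the measurable $\omega\mapsto B(\omega)h$ with the continuous norm, so the countable supremum is measurable.

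For (iv), fix $h\in H_1$ and pick $h_n\in\dot H_1$ with $h_n\to h$. The continuity of $B(\omega)$ on $H_1$ yields $B(\omega)h_n\to B(\omega)h$ in $H_2$ pointwise in $\omega$; the hypothesis makes each $\omega\mapsto B(\omega)h_n$ measurable, and a pointwise limit of measurable maps into the separable metric space $H_2$ is again measurable.

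The only non-routine step is (i): one must produce a Borel-measurable approximation $\pi_n$ that is constant on the partition pieces and converges pointwise to the identity on $H_1$, and one must check that the pieces $A_{n,k}$ are indeed Borel (which they are, being finite Boolean combinations of open balls and their complements). All remaining statements are corollaries of (i) together with the separability of the target spaces; no completeness of $(\Omega,\mathcal{F},\mathbb{P})$ is needed.
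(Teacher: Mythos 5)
Your proposal is correct and follows essentially the same route as the paper: the paper proves (i) by citing Castaing and Valadier, Lemma III.14, which is precisely the Carath\'eodory joint-measurability result that you reprove from scratch via the piecewise-constant approximation $\pi_n$, while (ii)--(iv) are handled in both cases by composition, a countable supremum over a dense subset of the unit ball, and a pointwise-limit argument, respectively. The only difference is that you make the cited lemma self-contained; everything else coincides.
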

\begin{proof}
(i) Follows from Castaing and Valadier \cite{CasVal77} Lemma
III.14. and (ii) is a consequence of (i). (iii) follows because
the unit ball in $H_1$ contains a dense countable set and for (iv)
we note that $B(\cdot)h,\,h\in H_1$, is the pointwise limit for
some sequence $(B(\cdot)h_n),\,h_n\in \dot{H}_1$.  \hfill$\square$
\end{proof}


\section{\bf Multiplicative ergodic theorem and exponential dichotomy}  \label{ED}

In this section we introduce random dynamical systems $U$
consisting of linear continuous operators $U(t,\omega)\in L(H,H)$.
In particular, we now study linear random dynamical systems
generated by random linear evolution equations
\begin{equation}\label{eq2}
    \frac{du}{dt}+A(\theta_t\omega)u=0,\qquad u(0)=x\in H.
\end{equation}

To describe the properties of the operator $A$ let
$(H,(\cdot,\cdot),\|\cdot\|),\,(H_1,(\cdot,\cdot)_1,\|\cdot\|_1)$
be two separable Hilbert spaces where $H_1$ is densely and compact
embedded into $H$. We assume that $A$ is given by linear operators
$A(\omega)\in L(H_1,H)$ such that $\omega\to A(\omega)$ is
strongly measurable. In addition, $-A(\omega)$ are generators of
an analytic $C_0$--semigroups on $H$ denoted by $e^{-\tau
A(\omega)},\,\tau\ge 0$, and the function
 $t\to A(\theta_t\omega)$ is H{\"o}lder continuous with values in
 $L(H_1,H)$. Namely, the function
\begin{align*}
\mathbb{R}\ni t\to A(\theta_t\omega)
\end{align*}
is  in $C^\rho(\mathbb{R},L(H_1,H))$ for $  \rho\in (0,1)$. For
the definition of this space see Amann \cite{Ama95} Page 40f. We
also assume that there exists a random variable $k_1(\omega)\ge 0$
so that the resolvent set of $-(k_1(\omega)+A(\omega))$ denoted by
$\rho(-(k_1(\omega)+A(\omega)))$ contains $\mathbb{R}^+$, and the
mapping $t\to k_1(\theta_t\omega)$ is supposed to be H{\"o}lder
continuous. We define
$A_\omega(t):=A(\theta_t\omega),\,k_{1,\omega}(t):=k_1(\theta_t\omega)$
for $\omega\in\Omega$. According to the above properties,
$A_\omega$ generates a {\em fundamental solution} $U_\omega$ (or a
parabolic evolution operator), see Amann \cite{Ama95}. For our
application we need the following parts of the definition of a
fundamental solution. Let $J=[0,T],\,T>0$ or $\mathbb{R}^+$.
\begin{align}
    \label{eq3} U_\omega&\in C(J_\Delta,L_s(H)),\quad
    J_\Delta=\{(t,s)\in J^2,t\ge s\}.\\
    \label{eq4}
    &t\to U_\omega(t,s)x,\,x\in H,\,t\ge s\quad \text{solves}\\
    &\qquad \frac{du}{dt}+A_\omega(t)u=0,\; u(s)=x,\quad
\text{where}\; U_\omega(\cdot,s)\in  C^1(J\cap (s,\infty);L(H))
    .\nonumber \\
    \label{eq5} &U_\omega(t,t)={\rm id},\qquad
    U_\omega(t,s)=U_\omega(t,\tau)\circ U_\omega(\tau,s),\quad
    T>t\ge \tau\ge s.\\
    &\label{eq6} \sup_{T\ge t\ge
    s\ge 0}(t-s)\|A_\omega(t)U_\omega(t,s)\|<\infty.
\end{align}
$L_s(H)$ denotes the strong convergence on the set of continuous
linear operators $L(H)$ on $H$. For the operator norm
$L(H_1,H),\,L(H)$ we simply write $\|\cdot\|$. In addition, we
have
\begin{equation}\label{eq6a}
\|U_\omega(t,s)\|\le C_\omega e^{\mu_\omega(t-s)}
\end{equation}
for appropriate constants $C_\omega,\,\mu_\omega$, see Amann
\cite{Ama95} Theorem II.4.4.1.\\

We consider the following simple transform
\begin{equation}\label{eq0.1}
    U_{k_1,\omega}(t,s):=e^{-\int_0^tk_{1,\omega}(\tau)d\tau}U_\omega(t,s)e^{\int_0^s k_{1,\omega}(\tau)d\tau}.
\end{equation}
These operators are fundamental solutions of an equation generated
by
\begin{equation*}
    A_{k_1,\omega}(t)=k_{1,\omega}(t) {\rm id}+
    A_\omega(t).
\end{equation*}

We have $k_{1,\omega}(t)\le K_{1,\omega,T}$  on every interval
$[0,T]$. Then we can introduce the operator

\begin{equation*}
    A_{K_1,\omega,T}(t)=K_{1,\omega,T} {\rm id}+
    A_\omega(t).
\end{equation*}

For such a $K_{1,\omega,T}$, the condition (II.4.2.1) in Amann
\cite{Ama95}, Page 55, is satisfied on $[0,T]$. This gives us the
existence of a  unique fundamental solution with generator
$A_{K_1,\omega,T}$  and hence with generator $A_\omega$; see
\cite{Ama95}, Corollary II.4.4.2. In particular, for any $T>0$ we
have some $M_{T,\omega}$ such that
\begin{equation*}
    M^{-1}_{T,\omega}\|x\|_1\le \|A_{K_1,\omega,T}(t)x\|\le M_{T,\omega}\|x\|_1.
\end{equation*}
We then can conclude by (\ref{eq6})
\begin{equation*}
    \sup_{T\ge t>
    s\ge 0}(t-s)\|U_{K_{1,\omega,T}}(t,s)\|_1\le
    M_{T,\omega}\sup_{T\ge t>
    s\ge 0}(t-s)\|A_{K_{1,\omega,T}}(t)U_{K_{1,\omega,T}}(t,s)\|<\infty
\end{equation*}
such that $U_{K_{1,\omega,T}}(t,s),\,s<t\in J$ and hence
$U_\omega(t,s)$ for $t>s$ are compact linear operators by the
compact embedding
$H_1\subset H$. For $t=0$ case, see \cite{Ama95}.\\

Our intention is  now to derive from the fundamental solution a
random dynamical system. We  set
\begin{equation*}
    U(t,\omega):=U_\omega(t,0).
\end{equation*}
By $A_\omega(t)=A_{\theta_s\omega}(t-s),\,t\ge s$ the cocycle
property follows directly from (\ref{eq5})
\begin{equation}\label{eq7}
    U(t+\tau,\omega)=U(t,\theta_\tau\omega)\circ U(\tau,\omega)
\end{equation}

Replacing $A_\omega$ by the operator given by (\ref{eq0.1}) we can
assume that the resolvent set of $-A(\omega)$ contains
$\mathbb{R}^+$.

Considering the Yoshida approximations

\begin{equation*}
    A^\eps(\omega)=A(\omega)({\rm id}+\eps A(\omega))^{-1}\in
    L(H,H).
\end{equation*}

By our assumptions on the resolvent set these operators are
defined for $\eps>0$.

Then the solution of the equation

\begin{equation*}
    \frac{du}{dt}+A^\eps(\theta_t\omega)u=0,\quad u(0)=x
\end{equation*}

can be constructed by Picard iteration so that the associated
fundamental solution $U^\eps$ forms a random dynamical systems if
$A^\eps$ is strongly measurable. In particular, we note that from
Amann (II.6.1.9) follows that

\begin{equation*}
    t\to \|A^\eps(\theta_t\omega)\|
\end{equation*}

is H{\"o}lder continuous, hence locally integrable.\\

We have to prove that the Yoshida approximations are strongly
measurable. Indeed for $h\in H$ the operator
$(\lambda+A(\omega))^{-1}$ exists for every $\lambda>0$ as an
operator in $L(H,H_1)$. By Skorochod \cite{Sko84} Chapter II.6.3
the random variable $(\lambda+A(\omega))^{-1}h$ is measurable with
respect to $\mathcal{B}(H)$. But we have

\begin{equation*}
    \mathcal{B}(H)\cap H_1=\mathcal{B}(H_1),
\end{equation*}

see  Vishik and Fursikov\cite{VisFur88} Chapter II.2. which gives
the strong measurability of $A^\eps$.

Then by the convergence of the Yoshida approximations we have the
pointwise limit

\begin{equation*}
    \lim_{\eps\to 0}U^\eps(t,\omega)x=U(t,\omega)x
\end{equation*}

for every $x\in H$ (see Amann Theorem II.6.2.4) which shows that
$U$ is a random dynamical system. In particular, it holds, By
(\ref{eq3}), that the mapping $t\to U_\omega(t,0)x$ is continuous
for $t\ge 0$. Hence due  to Castaing and Valadier \cite{CasVal77}
Lemma III.14.
\begin{equation*}
    (t,\omega)\to U_\omega(t,0)x
\end{equation*}
is measurable.  Similarly, since $(t,\omega)\to U_\omega(t,0)x$ is
measurable for fixed $x\in H$, and the mapping $x\to
U_\omega(t,\omega)x$ is continuous, we have that
\begin{equation*}
    (t,\omega,x)\to U_\omega(t,0)x
\end{equation*}
is measurable. Together with (\ref{eq7}), $U$ defines a continuous
random dynamical system. If we consider the original random
dynamical system
by the inverse transform to (\ref{eq0.1}) we can conclude that $A$ generates a random dynamical system.\\

Summarizing the above discussions, we have the following theorem
on linear cocycles.

\begin{theorem}  (\textbf{Generation of linear cocycle})  \label{t1}  \\
Let $A(\omega)\in L(H_1,H)$ be generators of  analytic
$C_0$--semigroups on $H$. The separable Hilbert space $H_1$ is
compactly and densely embedded  in the separable Hilbert  space
$H$. In addition, we assume that $t\to A(\theta_t\omega)$ is
H{\"o}lder continuous in $L(H_1,H)$ and $\Omega\ni\omega\to
A(\omega)\in L(H_1,H)$ is strongly measurable, and the resolvent
set of $-A(\omega)$ contains the interval $[k_1(\omega),\infty)$
where $k_1\ge 0$ is a random variable.  Then (\ref{eq2}) generates
a random dynamical system of compact linear operators on $H$.
\end{theorem}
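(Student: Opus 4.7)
The plan is to assemble the proof from the ingredients developed in the discussion preceding the statement. The overall strategy is: (i) normalize via the scalar conjugation (\ref{eq0.1}) so that the resolvent set of $-A(\omega)$ contains $\mathbb{R}^+$; (ii) construct a pathwise fundamental solution $U_\omega(t,s)$ using Amann's parabolic evolution theory; (iii) verify the cocycle property, compactness, and joint measurability; (iv) undo the scalar conjugation to return to the original $A$.

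For the pathwise construction, fix $\omega$. The H{\"o}lder continuity of $t\mapsto A_\omega(t)$ in $L(H_1,H)$, together with the resolvent condition on $[k_1(\omega),\infty)$, places us in the setting of Amann's Corollary II.4.4.2 on each interval $[0,T]$, yielding a unique fundamental solution satisfying (\ref{eq3})--(\ref{eq6a}). Setting $U(t,\omega):=U_\omega(t,0)$, the cocycle property (\ref{eq7}) is immediate from the evolution identity (\ref{eq5}) once one observes $A_\omega(s+\cdot)=A_{\theta_s\omega}(\cdot)$. For compactness when $t>0$, the bound (\ref{eq6}) combined with the two-sided estimate $M^{-1}_{T,\omega}\|x\|_1\le\|A_{K_1,\omega,T}(t)x\|\le M_{T,\omega}\|x\|_1$ forces $U_\omega(t,s)$ to map bounded sets of $H$ into bounded sets of $H_1$, and the compact embedding $H_1\hookrightarrow H$ then gives compactness as an operator on $H$.

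The main obstacle is measurability: since Amann's construction is pathwise, one must show that $\omega\mapsto U(t,\omega)x$ is measurable. The approach is to approximate by the Yosida regularizations $A^\eps(\omega)=A(\omega)(\mathrm{id}+\eps A(\omega))^{-1}\in L(H,H)$. Strong measurability of $A^\eps$ reduces to showing that $(\lambda+A(\omega))^{-1}h\in H_1$ is $\mathcal{B}(H_1)$-measurable for every $h\in H$; this follows from Skorokhod's $\mathcal{B}(H)$-measurability combined with the identity $\mathcal{B}(H)\cap H_1=\mathcal{B}(H_1)$ (Vishik--Fursikov). With $A^\eps$ bounded, strongly measurable, and H{\"o}lder continuous along $\theta$-orbits, the corresponding $U^\eps(t,\omega)$ can be built by Picard iteration so that each iterate, and hence $U^\eps$, depends measurably on $\omega$. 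The pointwise limit $U^\eps(t,\omega)x\to U(t,\omega)x$ (Amann Theorem II.6.2.4) transfers measurability to $U$. Joint $(t,\omega,x)$-measurability then follows from continuity of $t\mapsto U_\omega(t,0)x$ via (\ref{eq3}) and of $x\mapsto U_\omega(t,0)x$, combined with Castaing--Valadier Lemma III.14. Reversing the scalar conjugation (\ref{eq0.1})---which preserves the cocycle, measurability, and compactness properties since it amounts to multiplication by a scalar multiplicative cocycle---returns the statement for the original $A$, completing the proof.
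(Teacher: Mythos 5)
Your proposal is correct and follows essentially the same route as the paper: normalize by the scalar conjugation (\ref{eq0.1}), invoke Amann's pathwise theory for the fundamental solution with the cocycle identity from (\ref{eq5}), obtain compactness from the smoothing bound (\ref{eq6}) together with the two-sided equivalence with the $H_1$-norm and the compact embedding, establish strong measurability through the Yosida approximations (via Skorokhod and the Vishik--Fursikov identity $\mathcal{B}(H)\cap H_1=\mathcal{B}(H_1)$) and Amann's Theorem II.6.2.4, and then undo the conjugation. All the key lemmas and the order in which they are deployed coincide with the paper's argument.
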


We consider the following example. Let $A$ be the following linear
differential operator over a bounded domain
$\mathcal{O}\in\mathbb{R}^d$ with $C^\infty$-smooth boundary
$\partial \mathcal{O}$,
\begin{equation}\label{opA}
    A(x,\omega,D)u=\sum_{|\gamma|,|\delta|\le
    m}(-1)^{|\gamma|}D^\gamma(a_{\gamma,\delta}(x,\omega)D^\delta)u.
\end{equation}

We suppose that $a_{\gamma,\delta}$ forms a  stochastic process

\begin{equation*}
    (t,\omega)\to a_{\gamma,\delta}(\theta_t\omega,\cdot)\in
    C^m({\bar {\mathcal{O}}})
\end{equation*}

which has H{\"o}lder continuous path. The principal part of  $A$

\begin{equation*}
    A_0(x,\omega,D)u=\sum_{|\gamma|,|\delta|=
    m}(-1)^{|\gamma|}D^\gamma(a_{\gamma,\delta}(x,\omega)D^\delta)u.
\end{equation*}

is supposed to be uniformly elliptic

\begin{equation*}
    \sum_{|\gamma|,|\delta|=m}a_{\gamma,\delta}(x,\omega)z_\gamma
    z_\delta\ge 2k_0(\omega)|z|^m,\quad z=(\cdots, z_\gamma,\cdots)
\end{equation*}

where the vector $z$ is indexed by the multi--index $\gamma$. The
random variable $k_0(\omega)\in (0,\infty)$ is supposed to be
independent of $x\in {\bar {\mathcal{O}}}$. We also assume that
$t\to k_0^{-1}(\theta_t\omega)$ is H{\"o}lder continuous. The
differential operator will be augmented by boundary conditions

\begin{equation}\label{opA2}
    u|_{\partial \mathcal{O}}=\frac{\partial u}{\partial n}|_{\partial
    \mathcal{O}}=\cdots = \frac{\partial u^{m-1}}{\partial n^{m-1}}|_{\partial
    \mathcal{O}}
\end{equation}

and $n$ denotes the outer normal.  We set
\begin{equation*}
    H=L^2(\mathcal{O}),\quad V= H_{0}^m(\mathcal{O}),\quad H_1=V\cap H^{2m}(\mathcal{O})
\end{equation*}
where $H_0^m$ and $H^{2m}$ are standard Sobolev spaces. A more
specific example is $A=\Delta$ (Laplace operator), under the zero
Dirichlet boundary condition.
 \\

We introduce the continuous bilinear form on $V$:

\begin{equation*}
    b_\omega(u,v)=\sum_{\gamma,\delta\le m}(a_{\gamma,\delta}(\omega) D^\delta
    u,D^\gamma v)+k_1(\omega)(u,v).
\end{equation*}

satisfying the Lax--Milgram condition

\begin{equation*}
b_\omega(u,v)\ge k_0(\omega)\|u\|_V^2.
\end{equation*}

Then $A(x,\omega,D)$ generates an analytic $C_0$--semigroup in $H$
with generator denoted by $A(\omega)$ and $D(A(\omega))= H_1$ for
every $\omega\in\Omega$. We note that

\begin{equation*}
   \omega\to  A(\cdot,\omega,D)h\in H,\qquad h\in C_0^\infty(\mathcal{O})
\end{equation*}

is measurable, so that $\omega\to A(\omega)h,\quad h\in H$, is
measurable. In addition, by the remarks about the processes
$a_{\gamma,\delta}(\theta_t\omega,\cdot)$, the operators
$A(\theta_t\omega)$ are in  $L(H_1,H)$ so that we have to ensure
that  terms like

\begin{equation*}
    \sup_{h\in
    C_0^\infty(\bar{\mathcal{O}}),\|h\|_{1}=1}\|(A(\theta_t\omega)-A(\theta_s\omega))h\|^2
\end{equation*}

are H{\"o}lder continuous for $\omega\in\Omega$. Indeed, we have
for appropriate $\gamma,\,\delta$

\begin{align*}
    \int_\mathcal{O}&|D^\gamma(a_{\gamma,\delta}(\theta_s\omega,x)-a_{\gamma,\delta}(\theta_t\omega,x))D^\delta
    h(x)|^2dx\\
    &\le
    \sup_{x\in \bar{\mathcal{O}}}|D^\gamma
    a_{\gamma,\delta}(x,\theta_s\omega)-D^\gamma
    a_{\gamma,\delta}(x,\theta_t\omega)|^2\int_\mathcal{O}|D^\delta
    h(x)|^2dx\\&\le
    \|a_{\gamma,\delta}(\theta_s\omega)-a_{\gamma,\delta}(\theta_t\omega)\|_{C^m(\bar{\mathcal{O}})}^2
    \|h\|_{{W_2^{2m}(\bar{\mathcal{O}})}}^2.
    &
\end{align*}

We note that $k_1$ can be calculated by an interpolation argument.
Then, by the assumptions on $a_{\gamma,\delta}$ and $k_0$, it
follows that $t\to k_1(\theta_t\omega)$ is H{\"o}lder continuous.

In the following we describe the stability behavior of linear
random dynamical systems with an infinite dimensional state space.
To do this we formulate an infinite dimensional version of the
multiplicative ergodic theorem; see Ruelle \cite{Rue82}. A version
of this theorem for continuous time can be found in Mohammed et
al. \cite{Mohzhazha03}.
\begin{theorem}\label{t2}
Let $U$ be a linear random dynamical system of compact operators
for $t>0$ on $H$ satisfying the following integrability condition
\begin{equation*}
    \mathbb{E}\sup_{0\le t\le 1}\log^+\|U(t,\omega)\|+
    \mathbb{E}\sup_{0\le t\le
    1}\log^+\|U(1-t,\theta_t\omega)\|<\infty.
\end{equation*}
Then there exist finitely or infinitely many deterministic numbers
$\lambda_1>\lambda_2>\cdots$ (with $-\infty$ possible) and linear
spaces $H=V_1\supset V_2(\omega)\supset \cdots$, such that\\
(i)  Each linear space  $V_i(\omega)$ has a finite co-dimension
independent of $\omega$. \\
(ii) The following limits and invariance conditions hold:
\begin{align*}
   & \lim_{t\to\infty}\frac{1}{t}\log\|U(t,\omega)x\|=\lambda_i
    \qquad \text{for}\quad x\in V_i(\omega)\setminus
    V_{i+1}(\omega)\\
    &
    U(t,\omega)V_i(\omega)\subset V_i(\theta_t\omega)\quad
    \text{for}\quad t\ge 0
\end{align*}
for $t\ge 0$ and for all $\omega$ contained in a set
$\tilde\Omega$ of full measure such that
$\theta_t\tilde\Omega\subset\tilde\Omega,\,t\ge 0$.
\end{theorem}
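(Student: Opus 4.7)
The plan is to reduce to the discrete-time multiplicative ergodic theorem of Ruelle and then interpolate to real time using the two sup-integrability conditions. By Theorem \ref{t1}, every $U(t,\omega)$ with $t>0$ is compact; in particular the discrete cocycle $T_n(\omega):=U(n,\omega)$ over $(\Omega,\mathcal{F},\mathbb{P},\theta_1)$ consists of compact operators, and its generator $T_1(\omega)=U(1,\omega)$ satisfies $\mathbb{E}\log^+\|T_1(\omega)\|<\infty$ because $\|U(1,\omega)\|\le \sup_{0\le t\le 1}\|U(t,\omega)\|$. Applying Ruelle's infinite-dimensional MET for compact operators to $T_n$ then yields deterministic exponents $\lambda_1>\lambda_2>\cdots$ and a measurable decreasing filtration $H=V_1\supset V_2(\omega)\supset\cdots$, each $V_i(\omega)$ of deterministic finite codimension, with $T_1(\omega)V_i(\omega)\subset V_i(\theta_1\omega)$ and
\[
\lim_{n\to\infty}\frac{1}{n}\log\|T_n(\omega)x\|=\lambda_i, \qquad x\in V_i(\omega)\setminus V_{i+1}(\omega),
\]
on a $\theta_1$-invariant set of full measure.

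To lift to continuous $t$, I would write $t=n+s$ with $n=\lfloor t\rfloor$, $s\in[0,1)$, and apply the cocycle identity in both directions to obtain the sandwich
\[
\frac{\|U(n+1,\omega)x\|}{\|U(1-s,\theta_t\omega)\|}\le \|U(t,\omega)x\|\le \|U(s,\theta_n\omega)\|\,\|U(n,\omega)x\|.
\]
Setting $\xi(\omega):=\sup_{0\le s\le 1}\log^+\|U(s,\omega)\|$ and $\eta(\omega):=\sup_{0\le t\le 1}\log^+\|U(1-t,\theta_t\omega)\|$, the hypothesis gives $\mathbb{E}\xi,\mathbb{E}\eta<\infty$, so Birkhoff's ergodic theorem applied to $\xi\circ\theta_n$ and $\eta\circ\theta_n$ yields $\tfrac{1}{n}\xi(\theta_n\omega)\to 0$ and $\tfrac{1}{n}\eta(\theta_n\omega)\to 0$ almost surely. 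Taking logs in the sandwich and dividing by $t$ then upgrades the discrete limit to $\lim_{t\to\infty}\tfrac{1}{t}\log\|U(t,\omega)x\|=\lambda_i$ for $x\in V_i(\omega)\setminus V_{i+1}(\omega)$.

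For the invariance $U(t,\omega)V_i(\omega)\subset V_i(\theta_t\omega)$ at non-integer $t$, I would use the characterization of $V_i(\theta_t\omega)$ as the set of $y\in H$ whose forward discrete iterates $U(n,\theta_t\omega)y$ grow with rate at most $\lambda_i$: for $x\in V_i(\omega)$, the cocycle identity gives $U(n,\theta_t\omega)U(t,\omega)x=U(n+t,\omega)x$, which by the previous step grows at rate at most $\lambda_i$, so $U(t,\omega)x\in V_i(\theta_t\omega)$. The $\theta_t$-invariance of the exceptional set for all real $t$ follows by intersecting the $\theta_1$-invariant set from Ruelle's theorem with the $\theta_t$-invariant full-measure sets obtained from Birkhoff for $\xi$ and $\eta$.

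The main obstacle is the continuous-time interpolation: Ruelle's theorem itself is classical, but guaranteeing that the integer-time exponents actually describe the real-time decay rates relies precisely on the two sup-integrability conditions to kill within-unit-interval fluctuations via Birkhoff. These hypotheses are sharp for that step — any weakening would leave a residual $O(1)$ fluctuation per unit interval which, though tempered divided by $t$, could still spoil the existence of the Lyapunov exponent at non-integer times.
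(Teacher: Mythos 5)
The paper does not actually prove Theorem~\ref{t2}; it quotes it as a known result, citing Ruelle's Hilbert-space multiplicative ergodic theorem and the continuous-time version of Mohammed, Zhang and Zhao, so there is no paper proof to compare against. Your sketch is the standard bridge from Ruelle's discrete-time theorem to continuous time: reduce to the cocycle $T_n=U(n,\omega)$, invoke the compact-operator MET, and then interpolate via the sandwich
\[
\frac{\|U(n+1,\omega)x\|}{\|U(1-s,\theta_t\omega)\|}\le\|U(t,\omega)x\|\le\|U(s,\theta_n\omega)\|\,\|U(n,\omega)x\|,
\]
controlling the within-interval fluctuations by Birkhoff applied to $\xi$ and $\eta$. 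The sandwich is set up correctly (note $\theta_t=\theta_s\theta_n$, so $\log^+\|U(1-s,\theta_t\omega)\|\le\eta(\theta_n\omega)$), and the argument $\tfrac1n\xi(\theta_n\omega)\to0$ from $\mathbb{E}\xi<\infty$ is a legitimate consequence of the Birkhoff theorem.

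There is, however, a gap in your final paragraph on the $\{\theta_t\}_{t\ge0}$-invariance of the exceptional set. Both the Oseledets full-measure set produced by the discrete MET and the Birkhoff full-measure sets for $\xi\circ\theta_n$, $\eta\circ\theta_n$ are only $\theta_1$-invariant (integer shifts), and the intersection of a $\theta_1$-invariant set with anything is at best $\theta_1$-invariant. You cannot conclude $\theta_t\tilde\Omega\subset\tilde\Omega$ for all real $t\ge0$ from that intersection alone; for $t\notin\mathbb{Z}$ this requires a separate perfection argument, e.g.\ reducing to a fractional shift $s\in(-1,0)$ and showing via the integrability of $D(\omega)=\log^+\sup_{t_1,t_2\in[0,1]}\|U(t_1,\theta_{t_2}\omega)\|$ that the integer-time asymptotics for $\theta_s\omega$ are inherited from those for $\omega$. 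Precisely this kind of shift reduction appears later in the paper, in the proof of Lemma~\ref{l2} (see the estimate (\ref{eqn1}) and the restriction to $s\in(-1,0)$); that is the missing step you would need to import here to make your construction of $\tilde\Omega$ actually $\{\theta_t\}_{t\ge0}$-invariant.
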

The numbers $\lambda_1,\,\lambda_2,\cdots$ are called the {\em
Lyapunov exponents} associated to $U$. The set of these numbers
forms the
{\em Lyapunov spectrum}.\\

By the above theorem we can derive the following exponential
dichotomy condition for $U$; see Mohammed et al.
\cite{Mohzhazha03}.

\begin{theorem}\label{t3}
Suppose that the following exponential integrability condition is
satisfied
\begin{equation}\label{eq21}
    D(\omega):=\log^+\sup_{t_1,\,t_2\in[0,1]}\|U(t_1,\theta_{t_2}\omega)\|,\qquad \mathbb{E}D<\infty
\end{equation}
and suppose that $\lambda\in\mathbb{R}$ is not contained in the
Lyapunov spectrum. Then there exists a
$\{\theta_t\}_{t\in\mathbb{R}}$ invariant set $\tilde \Omega$ of
full measure such that for $\omega\in\tilde \Omega$ we have the
following properties: There exist linear spaces
$E^u(\omega),\,E^s(\omega)$ such that
\begin{equation*}
    H=E^u(\omega)\oplus E^s(\omega).
\end{equation*}
The space $E^u(\omega)$ has a finite dimension independent of
$\omega$;
\begin{align*}
    U(t,\omega)E^u(\omega)&=E^u(\theta_t\omega)\\
    U(t,\omega)E^s(\omega)&\subset E^s(\theta_t\omega)
\end{align*}
for $t\ge 0$. The restriction of $U(t,\omega)$ to $E^u(\omega)$ is
invertible. There exist measurable projections
$\Pi^u(\omega),\,\Pi^s(\omega)$ onto $E^u(\omega),\,E^s(\omega)$.
 In the case that $\lambda_1<\lambda$ we have
$E^u=\{0\}$.\\
Suppose that $\lambda_1>\lambda $ and let $\lambda_+$ be the
smallest Lyapunov exponent bigger than $\lambda$ and let
$\lambda_-$ be the biggest Lyapunov exponent smaller than
$\lambda$. Then we have for any $\eps>0$
\begin{align*}
    \|U(t,\omega)x\|&\ge\|x\|e^{\alpha t}\quad \text{for }
    x\in E^u(\omega),\quad
    t\ge\tau(\omega,\eps,x),\quad \alpha=\lambda_+-\eps\\
    \|U(t,\omega)x\|&\le\|x\|e^{\beta t}\quad \text{for }
    x\in E^s(\omega),\quad
    t\ge\tau(\omega,\eps,x),\quad \quad \beta=\lambda_-+\eps
\end{align*}
where $\eps$ is chosen so small that $\alpha>\beta$.
\end{theorem}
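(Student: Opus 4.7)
The plan is to promote the forward filtration from the multiplicative ergodic theorem (Theorem \ref{t2}) to a full invariant splitting. Let $k$ be such that $\lambda_k=\lambda_+$; then every nonzero vector in $V_{k+1}(\omega)$ has upper Lyapunov exponent at most $\lambda_-$. It is natural to set
\begin{equation*}
    E^s(\omega) := V_{k+1}(\omega),
\end{equation*}
a closed forward-invariant subspace of finite codimension $d$ independent of $\omega$. The main task is then to construct a measurable family of $d$-dimensional subspaces $E^u(\omega)$, complementary to $E^s(\omega)$, invariant under $U$ in the stronger sense $U(t,\omega)E^u(\omega)=E^u(\theta_t\omega)$, and consisting of vectors with growth rate at least $\lambda_+$.

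For the construction of $E^u(\omega)$ I would follow the strategy of Mohammed--Zhang--Zhao \cite{Mohzhazha03}. For each $i\le k$, the cocycle induced by $U$ on the finite-dimensional quotient $V_i(\omega)/V_{i+1}(\omega)$ is invertible, and one can lift back to $H$ to obtain an Oseledets subspace $F_i(\omega)\subset V_i(\omega)$ of dimension $\dim V_i/V_{i+1}$ on which the exponent equals exactly $\lambda_i$. Concretely, $F_i(\omega)$ is obtained as a pullback
\begin{equation*}
    F_i(\omega) = \lim_{n\to\infty} U(n,\theta_{-n}\omega)\,W_i(\theta_{-n}\omega),
\end{equation*}
where $W_i$ is any fixed measurable complement of $V_{i+1}$ in $V_i$. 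The exponential integrability \eqref{eq21}, together with Kingman's subadditive ergodic theorem applied to the finite-dimensional quotient cocycles, guarantees both the convergence of the limit and the temperedness of the angles between the $F_i(\omega)$, so that $E^u(\omega) := F_1(\omega)\oplus\cdots\oplus F_k(\omega)$ is a closed $d$-dimensional subspace transverse to $E^s(\omega)$. Measurability of the resulting oblique projections $\Pi^u(\omega),\Pi^s(\omega)$ then follows from measurability of each $F_i(\omega)$ and Lemma \ref{l4}.

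Once the splitting is in place, the exponential estimates fall out of Theorem \ref{t2}. For $x\in E^u(\omega)\setminus\{0\}$, decomposing $x = x_1+\cdots+x_k$ with $x_i\in F_i(\omega)$, the MET gives $\tfrac{1}{t}\log\|U(t,\omega)x_i\|\to\lambda_i \ge \lambda_+$, and dominating by the slowest-growing nonzero component yields $\|U(t,\omega)x\|\ge\|x\|e^{\alpha t}$ for $t\ge \tau(\omega,\varepsilon,x)$ with $\alpha=\lambda_+-\varepsilon$. Invertibility of $U(t,\omega)|_{E^u(\omega)}$ is automatic from injectivity (supplied by the lower bound) together with the dimension equality $\dim E^u(\omega)=\dim E^u(\theta_t\omega)=d$, which also upgrades the forward inclusion to the equality $U(t,\omega)E^u(\omega)=E^u(\theta_t\omega)$. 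For $x\in E^s(\omega)=V_{k+1}(\omega)$, Theorem \ref{t2}(ii) directly gives $\tfrac{1}{t}\log\|U(t,\omega)x\|\le\lambda_-$, producing the upper bound with $\beta=\lambda_-+\varepsilon$; the parameter $\varepsilon$ is chosen small enough that $\alpha>\beta$, which is possible precisely because $\lambda$ lies strictly between $\lambda_-$ and $\lambda_+$. The degenerate case $\lambda_1<\lambda$ reduces to $E^u=\{0\}$ and requires no construction.

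The hard part is the construction of the invariant complement $E^u(\omega)$, since Theorem \ref{t2} as stated supplies only a forward filtration. The backward dynamics enters through the pullback limits defining $F_i(\omega)$, and the exponential integrability \eqref{eq21} is precisely what is needed to control the norms of $U(n,\theta_{-n}\omega)$ on the finite-dimensional quotients and to secure temperedness of the resulting angles and of the projection norms $\|\Pi^u(\omega)\|,\|\Pi^s(\omega)\|$. Without this, the dichotomy constants would fail to be tempered, which would in turn invalidate the subsequent Lyapunov--Perron arguments used to build the invariant manifolds in Section \ref{IM}.
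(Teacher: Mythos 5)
The paper does not in fact prove Theorem \ref{t3}; it refers the reader directly to Mohammed--Zhang--Zhao \cite{Mohzhazha03} and merely records in Remark \ref{r1}(ii) that $E^s(\omega)=V_i(\omega)$ with $\lambda_-=\lambda_i$, which is exactly your choice $E^s=V_{k+1}$. Your outline---taking $E^s$ from the Oseledets filtration, constructing $E^u$ by pullback limits $F_i(\omega)=\lim_{n\to\infty}U(n,\theta_{-n}\omega)W_i(\theta_{-n}\omega)$ on the finite-dimensional quotients, and using the integrability condition \eqref{eq21} plus Kingman's theorem to control the limits, the angles, and the temperedness of the projections---is precisely the strategy of the reference the paper cites, so it matches the paper's intended route.

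One caveat worth stating explicitly if you were to expand this into a full proof: the existence and $\theta$-invariance of the pullback limit defining $F_i(\omega)$, and in particular the injectivity of $U(n,\theta_{-n}\omega)$ on the chosen complements $W_i(\theta_{-n}\omega)$ so that the limit is a genuine $d$-dimensional subspace transverse to $E^s(\omega)$, is the technically heavy step that occupies a substantial part of \cite{Mohzhazha03}; your sketch names the right ingredients but should not give the impression that this is automatic from \eqref{eq21} alone.
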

\begin{remark}\label{r1}
{\rm  i) The integrability condition of Theorem \ref{t3} ensures
the
integrability condition of Theorem \ref{t2}.\\
ii)
 The space $E^s(\omega)$ is given by $V_i(\omega)$ if
 $\lambda_-=\lambda_i$.\\
 iii) It follows directly from the invariance of the spaces
 $E^u(\omega)$ and $E^s(\omega)$, that
 \begin{align*}
    \Pi^u(\theta_t\omega)U(t,\omega)&=U(t,\omega)\Pi^u(\omega)\\
    \Pi^s(\theta_t\omega)U(t,\omega)&=U(t,\omega)\Pi^s(\omega)
\end{align*}
on a $\{\theta_t\}_{t\in\mathbb{R}}$-invariant set of full
measure.

}
 \end{remark}

We denote the restriction of $U(t,\omega)$ to
$E^u(\omega),\,E^s(\omega)$ by
$U_\lambda^u(t,\omega),\;U_\lambda^s(t,\omega)$:

\begin{equation*}
U_\lambda^{s/u}(t,\omega):E_\lambda^{s/u}(\omega)\to
E_\lambda^{s/u}(\theta_t\omega).
\end{equation*}

In the following we need the norm of these operators
$U_\lambda^{s/u}(t,\omega)$ which should be denoted by
$\|U_\lambda^{{s/u}}(t,\theta_s\omega)\|_{L(E^{s/u})(\theta_s\omega),E^{s/u}(\theta_{t+s}\omega))}$.
But to avoid these long expressions in the norm we simply write
$\|\cdot\|$ for the norm. From the context, this is not to be
confused with the norm in $H$.

\begin{lemma}\label{l2}
Suppose that the integrability condition of Theorem \ref{t3} is
satisfied. Then there exists a $\{\theta_t\}_{t\in\mathbb{R}}$
invariant set of full $\mathbb{P}$-measure and a constant
$H_\omega^\Lambda $ such that
\begin{equation*}
    \|U_\lambda^u(t,\omega)\|\le H_\omega^\Lambda e^{\Lambda
    t}\quad\text{for }t\ge 0,\,\omega\in\tilde\Omega
\end{equation*}
for a sufficiently large $\Lambda$.
\end{lemma}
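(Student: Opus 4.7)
The plan is to bound the full cocycle norm $\|U(t,\omega)\|$ exponentially in $t$ and then observe that, since $E^u(\omega)$ is a subspace of $H$ carrying the inherited norm, we automatically have $\|U_\lambda^u(t,\omega)\| \le \|U(t,\omega)\|$ in operator norm, so the same bound transfers to $U_\lambda^u$.

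First I would discretize time. Writing $t = n+s$ with $n = \lfloor t \rfloor$ and $s \in [0,1)$, the cocycle property (\ref{eq7}) yields
\begin{equation*}
U(t,\omega) = U(s,\theta_n\omega)\,U(1,\theta_{n-1}\omega)\cdots U(1,\theta_0\omega).
\end{equation*}
By the definition of $D$ in (\ref{eq21}), each factor is bounded in operator norm by $e^{D(\theta_k\omega)}$ for the corresponding $k\in\{0,\ldots,n\}$, so
\begin{equation*}
\log\|U(t,\omega)\| \;\le\; \sum_{k=0}^{n} D(\theta_k\omega).
\end{equation*}
Since $\mathbb{E}D < \infty$ by hypothesis and $\mathbb{P}$ is ergodic with respect to $\theta$, Birkhoff's ergodic theorem produces a $\{\theta_t\}_{t\in\mathbb{R}}$-invariant set $\tilde\Omega$ of full measure on which
\begin{equation*}
\frac{1}{n+1}\sum_{k=0}^{n} D(\theta_k\omega) \;\longrightarrow\; \mathbb{E}D.
\end{equation*}
(To obtain a $\{\theta_t\}_{t\in\mathbb{R}}$-invariant exceptional set rather than merely a $\theta_1$-invariant one, intersect the Birkhoff set over a countable dense set of real shifts, using that each $\theta_s$ preserves $\mathbb{P}$.)

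Next, fix any $\Lambda > \mathbb{E}D$. For $\omega\in\tilde\Omega$ there exists $N(\omega)$ with $\sum_{k=0}^{n} D(\theta_k\omega) \le \Lambda(n+1)$ for $n\ge N(\omega)$, hence $\|U(t,\omega)\| \le e^\Lambda e^{\Lambda t}$ for $t \ge N(\omega)+1$. On the compact interval $[0,N(\omega)+1]$ the map $t\mapsto\|U(t,\omega)\|$ is bounded by a finite random constant thanks to (\ref{eq3}) and (\ref{eq6a}). Defining
\begin{equation*}
H_\omega^\Lambda := \max\!\Bigl(e^\Lambda,\; \sup_{0\le t\le N(\omega)+1}\|U(t,\omega)\|\,e^{-\Lambda t}\Bigr),
\end{equation*}
we obtain $\|U(t,\omega)\| \le H_\omega^\Lambda e^{\Lambda t}$ for all $t\ge 0$, and restricting to $E^u(\omega)$ finishes the proof.

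The only mildly delicate step is arranging a genuinely $\{\theta_t\}_{t\in\mathbb{R}}$-invariant set of full measure, which is handled by the countable intersection mentioned above; everything else is a routine combination of the cocycle identity, the integrability hypothesis (\ref{eq21}), and Birkhoff's ergodic theorem.
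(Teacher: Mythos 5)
Your core estimate---discretizing via the cocycle property, bounding each unit--time block by $e^{D(\theta_k\omega)}$, applying the ergodic theorem to $\tfrac1{n+1}\sum_{k=0}^n D(\theta_k\omega)$, and then restricting to $E^u(\omega)$---is sound and does give the inequality on a $\{\theta_n\}_{n\in\mathbb{Z}}$--invariant set of full measure, with $\Lambda$ somewhat larger than the paper's $\tilde\Lambda$ (you use Birkhoff on $D$, the paper uses Kingman on the subadditive process $n\mapsto\log\|U_\lambda^u(n,\omega)\|$; since the lemma only asks for ``sufficiently large $\Lambda$'' that discrepancy is harmless).

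The genuine gap is in promoting the exceptional set to $\{\theta_t\}_{t\in\mathbb{R}}$--invariance. Intersecting the Birkhoff set over a countable dense family of shifts $\{\theta_q\}_{q\in\mathbb{Q}}$ produces a set invariant under $\theta_q$ for $q\in\mathbb{Q}$, but not under $\theta_r$ for irrational $r$: the condition defining the Birkhoff set ($\tfrac1n\sum_{k<n} D(\theta_k\omega)\to\mathbb{E}D$) is a statement about the integer orbit, and there is no continuity in $\omega$ that would let you pass from rational to all real shifts. The alternative of taking the uncountable union $\bigcup_{s\in[0,1)}\theta_s\Omega_B$ is $\theta_{\mathbb{R}}$--invariant but may fail to be measurable. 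So the step you flag yourself as ``mildly delicate'' is in fact where the argument breaks.

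The paper avoids this by working directly with $U_\lambda^u$ rather than the full (non-invertible) $U$. Because $E^u(\omega)$ is finite dimensional, $U_\lambda^u$ extends to an invertible cocycle over all of $\mathbb{R}$, which lets one write, for $s\in(-1,0)$,
\begin{equation*}
U_\lambda^u(t,\theta_s\omega)=U_\lambda^u\bigl(t+s-[t+s],\theta_{[t+s]}\omega\bigr)\,U_\lambda^u\bigl([t+s],\omega\bigr)\,U_\lambda^u(-s,\theta_s\omega)
\end{equation*}
and bound the outer factors by $D$--terms (estimate (\ref{eqn1})). Combined with the auxiliary set $\Omega^2=\{\omega:\lim_{t\to\pm\infty}D(\theta_t\omega)/t=0\}$---which, unlike the Birkhoff set, \emph{is} automatically $\theta_{\mathbb{R}}$--invariant because $D(\theta_{t+s}\omega)/t\to 0$ follows trivially from $D(\theta_t\omega)/t\to 0$---this shows that the $\theta_{\mathbb{Z}}$--invariant good set is in fact closed under $\theta_s$ for $s\in(-1,0)$, hence $\theta_{\mathbb{R}}$--invariant. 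Your argument never uses the invertibility of $U_\lambda^u$, which is precisely the tool the paper needs to transfer information from $\omega$ to $\theta_s\omega$; bounding the full $\|U(t,\omega)\|$ gives no analogous handle because $U$ is not invertible on $H$.
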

\begin{proof}
1) We show that on a $\{\theta_t\}_{t\in\mathbb{R}}$ invariant set
of full measure
\begin{equation*}
\end{equation*}
By
\begin{equation*}
    \|U_\lambda^u(t,\omega)\|\le \|U(t,\omega)\|
\end{equation*}
we have that
$\mathbb{E}\log^u\|U_\lambda^u(1,\omega)\|=:\tilde\Lambda<\infty$.
By Kingman's theorem (see Ruelle \cite{Rue82}) there exists a set
of measure one such that for any $\omega$ in this set we have that
\begin{equation}\label{x1}
    \lim_{i\to\infty}\frac{1}{i}\log
    \|U_\lambda^u(i,\omega)\|=
    \tilde\Lambda.
\end{equation}
Hence
\begin{equation*}
    \Omega_n^1:=\{\omega\in\Omega:\limsup_{i\to\infty}
    \frac{1}{i}\log \|U_\lambda^u(i,\theta_n\omega)\|\le
    \tilde\Lambda\}\in\mathcal{F}\\
\end{equation*}

and set

\begin{equation*}
    \Omega^1:=\bigcap_{n\in\mathbb{Z}}\Omega_n^1.
\end{equation*}

This set is $\{\theta_t\}_{t\in\mathbb{Z}}$--invariant and has
probability one. Let $\Omega^2$ be the $\{\theta_t\}_{t\in
\mathbb{Z}}$--invariant set so that

\begin{equation*}
    \lim_{i\to\pm\infty}\frac{1}{i}D(\theta_{i+n}\omega)=0,\qquad
    n\in\mathbb{N}.
\end{equation*}

By the integrability condition (\ref{eq21}) and by

\begin{align*}
 D(\theta_t\omega)\le D(\theta_{1+[t]}\omega)+D(\theta_{[t]}\omega)
\end{align*}
we have that  $\Omega^2$ has full measure

\begin{equation*}
    \lim_{t\to\pm\infty}\frac{1}{t} D(\theta_t\omega)=0.
\end{equation*}

such that $\{\theta_t\}_{t\in\mathbb{R}}$--invariant. \\

2) Since $\Omega^1\cap\Omega^2$ is $\{\theta_t\}_{t\in\mathbb{Z}}$
invariant we can restrict
ourselves to the case that  $s\in (-1,0)$ for the invariance with respect to continuous time.\\
We have for $\omega\in\Omega^1\cap\Omega^2$ that
\begin{align}\label{eqn1}
\begin{split}
   \log\|U_\lambda^u(t,\theta_s\omega)\|\le &
    \log^+\|U_\lambda^u(s+t-[s+t],\theta_{[s+t]}\omega)\|\\
    +
    &\log\|U_\lambda^u([s+t],\omega)\|+\log^+\|U_\lambda^u(-s,\theta_s\omega)\|\\
    \le&
    D(\theta_{[s+t]}\omega)+\log\|U_\lambda^u([s+t],\omega)\|.
 \end{split}
\end{align}
(Note $s\le 0$). Thus
$\limsup_{t\to\infty}\log\|U_\lambda^u(t,\theta_s\omega)\|/t\le
\tilde \Lambda$. The same is true if we replace $\omega$ by
$\theta_{n}\omega,\,n\in\mathbb{Z}$. Hence
$\theta_s\omega\in\Omega^1$ and hence
$\theta_s\omega\in\Omega^1\cap \Omega^2$. On the other hand, for
$s=0$ we obtain the conclusion.                 \hfill$\square$
\end{proof}

The following lemma states that one can restrict a metric
dynamical system to a smaller invariant set of full measure.

\begin{lemma}\label{l3}
Let $\tilde\Omega$ be defined in Lemma \ref{l2} and let
$\mathcal{F}_{\tilde\Omega}$ be the trace $\sigma$ algebra of
$\mathcal{F}$ with respect to $\tilde\Omega$.  Then $\theta$ is
\begin{equation*}
    (\mathcal{F}_{\tilde\Omega}\otimes\mathcal{B}(\mathbb{R}),\mathcal{F}_{\tilde\Omega})-\text{measurable}.
\end{equation*}
\end{lemma}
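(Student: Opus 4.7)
The plan is to verify measurability of $\theta|_{\mathbb{R}\times\tilde\Omega}$ by pulling back an arbitrary set in the trace $\sigma$-algebra and showing the pullback lies in $\mathcal{B}(\mathbb{R})\otimes\mathcal{F}_{\tilde\Omega}$. The two structural inputs I will use are: (a) $\theta$ is already jointly measurable as a map $(\mathbb{R}\times\Omega,\mathcal{B}(\mathbb{R})\otimes\mathcal{F})\to(\Omega,\mathcal{F})$ by the hypothesis of a metric dynamical system; and (b) by construction in Lemma \ref{l2}, the set $\tilde\Omega=\Omega^1\cap\Omega^2$ belongs to $\mathcal{F}$ and is $\{\theta_t\}_{t\in\mathbb{R}}$-invariant, so $\theta_t(\tilde\Omega)\subseteq\tilde\Omega$ for every $t\in\mathbb{R}$. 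In particular, the restricted map $\theta|_{\mathbb{R}\times\tilde\Omega}$ really does take values in $\tilde\Omega$.

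First I would take an arbitrary $A\in\mathcal{F}_{\tilde\Omega}$. By definition of the trace $\sigma$-algebra, $A=B\cap\tilde\Omega$ for some $B\in\mathcal{F}$. Using invariance of $\tilde\Omega$, the condition ``$\theta_t\omega\in A$'' for $\omega\in\tilde\Omega$ is equivalent to ``$\theta_t\omega\in B$'', since $\theta_t\omega\in\tilde\Omega$ is automatic. Hence
\begin{equation*}
\theta^{-1}(A)\cap(\mathbb{R}\times\tilde\Omega)
= \theta^{-1}(B)\cap(\mathbb{R}\times\tilde\Omega).
\end{equation*}
Measurability of the original flow gives $\theta^{-1}(B)\in\mathcal{B}(\mathbb{R})\otimes\mathcal{F}$, so the right-hand side lies in the trace $\sigma$-algebra $(\mathcal{B}(\mathbb{R})\otimes\mathcal{F})|_{\mathbb{R}\times\tilde\Omega}$.

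The last step is the standard identity that trace commutes with product, namely
\begin{equation*}
(\mathcal{B}(\mathbb{R})\otimes\mathcal{F})|_{\mathbb{R}\times\tilde\Omega}
=\mathcal{B}(\mathbb{R})\otimes\mathcal{F}_{\tilde\Omega},
\end{equation*}
which I would justify by observing that both $\sigma$-algebras are generated by rectangles of the form $C\times(F\cap\tilde\Omega)$ with $C\in\mathcal{B}(\mathbb{R})$ and $F\in\mathcal{F}$, since any measurable rectangle in the product of the traces has exactly this form, and conversely any generator of $\mathcal{B}(\mathbb{R})\otimes\mathcal{F}$ intersected with $\mathbb{R}\times\tilde\Omega$ produces such a rectangle.

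There is no substantial obstacle here; the lemma is an unpacking of definitions built on the invariance of $\tilde\Omega$ and the already-granted joint measurability of $\theta$. The only point that requires a brief justification (rather than being purely formal) is the trace-of-product versus product-of-traces identity, which I would include as a one-line generator argument. Everything else is immediate from the invariance $\theta_t(\tilde\Omega)\subseteq\tilde\Omega$ and the definition of the trace $\sigma$-algebra.
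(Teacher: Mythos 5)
Your proof is correct and follows essentially the same route as the paper: you write a trace set as $B\cap\tilde\Omega$, use $\theta_t$-invariance of $\tilde\Omega$ to replace $\theta^{-1}(A)$ by $\theta^{-1}(B)$ on $\mathbb{R}\times\tilde\Omega$, and conclude via the identity that the trace of the product $\sigma$-algebra equals the product of the traces. The paper justifies this last identity by citing Halmos (Section~5, Theorem~E) rather than writing out the generator argument, but that is the only difference.
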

\begin{proof}
$A^\prime\in \mathcal{F}_{\tilde\Omega}$ if and only if there
exists an $A\in \mathcal{F}$ such that $A^\prime=A\cap\tilde
\Omega$. Hence
\begin{equation*}
    \theta^{-1}(A^\prime)=\theta^{-1}(A)\cap
    (\tilde\Omega\times
    \mathbb{R})\in(\mathcal{F}\cap\mathcal{B}(\mathbb{R}))\cap(\tilde\Omega\times\mathbb{R})
\end{equation*}
by the invariance of $\tilde\Omega$. Let $R$ be the set of
measurable rectangle sets of $\Omega\times \mathbb{R}$. It follows
from Halmos \cite{Hal74} Section 5 Theorem E that
\begin{equation*}
(\mathcal{F}\cap\mathcal{B}(\mathbb{R}))\cap(\tilde\Omega\times\mathbb{R})
=\sigma(R)\cap (\tilde \Omega\times \mathbb{R})=\sigma(R\cap
(\tilde\Omega\times
\mathbb{R}))=\mathcal{F}_{\tilde\Omega}\otimes\mathcal{B}(\mathbb{R}).
\end{equation*}
This completes the proof.            \hfill$\square$
\end{proof}

Let $\tilde {\mathbb{P}}$ be the restriction of  $\mathbb{P}$ to
$\mathcal{F}_{\tilde\Omega}$. In the following we will denote the
new restricted metric dynamical system
$(\tilde\Omega,\mathcal{F}_{\tilde\Omega},\tilde{\mathbb{P}},\theta)$
by $(\Omega,\mathcal{F},{\mathbb{P}},\theta)$.

\medskip

Our considerations are based crucially on the following theorem,
which says that multiplicative ergodic theorem (MET) (i.e., the
existence of Lyapunov exponents) implies nonuniform exponential
dichotomy in infinite dimensional Hilbert spaces.

\begin{theorem} (\textbf{MET implies  nonuniform exponential dichotomy})  \label{t4}  \\
Assume the assumptions of Theorem \ref{t3}. Suppose that
$\lambda_1>\lambda$ and let $\lambda_+$ be the smallest Lyapunov
exponent bigger than $\lambda$ and let $\lambda_-$ be the biggest
Lyapunov exponent smaller than $\lambda$. There exist  a tempered
random variable $K_\lambda^s(\omega)$ and a tempered from below
random variable $K_\lambda^u(\omega)$ such that, for $t\ge 0$,
$\omega\in\Omega$ and $\eps >0$,,
\begin{align*}
    \|U_\lambda^u(t,\omega)\|&\ge
    K_\lambda^u(\omega) \; e^{(\lambda_+-\eps)t}\\
    \|U_\lambda^s(t,\omega)\|&\le
    K_\lambda^s(\omega) \; e^{(\lambda_-+\eps)t}.
\end{align*}
\end{theorem}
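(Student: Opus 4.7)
The plan is to combine Kingman's subadditive ergodic theorem with the real-time interpolation made possible by the integrability assumption (\ref{eq21}), and then to package the resulting a.s.\ pathwise bounds into random constants with the required temperedness properties. Throughout I regard $\eps>0$ as fixed and allow the random constants to depend on $\eps$.

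\emph{Integer-time bounds.} For the stable cocycle, $n \mapsto \log\|U^s_\lambda(n,\omega)\|$ is subadditive along the flow, and (\ref{eq21}) yields $\mathbb{E}\log^+\|U^s_\lambda(1,\cdot)\| \le \mathbb{E} D < \infty$. Kingman's theorem therefore produces an a.s.\ deterministic limit
\[
\lambda^\ast_s := \lim_{n\to\infty}\frac{1}{n}\log\|U^s_\lambda(n,\omega)\| \in [-\infty,\lambda_-],
\]
the upper bound $\lambda_-$ being forced by Theorem~\ref{t2} and Remark~\ref{r1}(ii), since every direction in $E^s(\omega)$ has Oseledets exponent at most $\lambda_-$. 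For the unstable part I use that $E^u(\omega)$ is finite dimensional with $U^u_\lambda(t,\omega)$ invertible: Theorem~\ref{t2} supplies a nonzero $x \in E^u(\omega)$ with $\lim_n \frac{1}{n}\log\|U^u_\lambda(n,\omega)x\| = \lambda_+$, giving $\|U^u_\lambda(n,\omega)\| \ge e^{(\lambda_+-\eps/2)n}$ for all sufficiently large $n$.

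\emph{Interpolation and construction of $K^s_\lambda,\,K^u_\lambda$.} For $t \in [n,n+1)$, factor $U^s_\lambda(t,\omega) = U^s_\lambda(t-n,\theta_n\omega)\,U^s_\lambda(n,\omega)$; the first factor has norm $\le e^{D(\theta_n\omega)}$ by the definition of $D$. Since $D$ is integrable, Birkhoff gives $D(\theta_n\omega)/n \to 0$ a.s., and combined with Kingman this yields $\limsup_{t\to\infty}\frac{1}{t}\log\|U^s_\lambda(t,\omega)\| \le \lambda_-$. Define
\[
K^s_\lambda(\omega) := \sup_{t\ge 0}\|U^s_\lambda(t,\omega)\|\,e^{-(\lambda_-+\eps)t};
\]
this is a.s.\ finite and the required upper bound holds by definition. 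Analogously, continuity of $t\mapsto\|U^u_\lambda(t,\omega)\|$ on $[0,T]$ (positive by invertibility of $U^u_\lambda$ on $E^u(\omega)$) together with the integer-time lower bound for large $t$ shows that
\[
K^u_\lambda(\omega) := \inf_{t\ge 0}\|U^u_\lambda(t,\omega)\|\,e^{-(\lambda_+-\eps)t}
\]
is strictly positive a.s., and the claimed lower bound follows.

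\emph{Temperedness -- the main obstacle.} The remaining task is to show $K^s_\lambda$ is tempered and $K^u_\lambda$ is tempered from below. The cocycle identity does \emph{not} directly bound $\|U^s_\lambda(t,\theta_s\omega)\|$ by $\|U^s_\lambda(t+s,\omega)\|$, because $U^s_\lambda(s,\omega)$ is generally not invertible. My strategy would be to rerun the preceding argument based at $\theta_s\omega$ and track how each random threshold (the Kingman threshold, the Birkhoff threshold on $D$, and the residual $\sup_{n\le N}(\log\|U^s_\lambda(n,\theta_s\omega)\|-(\lambda_-+\eps)n)$) transforms under $\theta_s$. All are built from the stationary integrable sequences $\{\log^+\|U^s_\lambda(1,\theta_k\omega)\|\}_k$ and $\{D(\theta_k\omega)\}_k$, so each grows sublinearly in $|s|$ by Birkhoff, forcing $\log^+K^s_\lambda(\theta_s\omega)=o(|s|)$ as $s\to\pm\infty$. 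A shortcut is to replace $K^s_\lambda$ with its tempered hull $\tilde K^s_\lambda(\omega):=\sup_{s\in\mathbb{R}}K^s_\lambda(\theta_s\omega)e^{-\eps|s|/2}$, which is tempered by construction and still majorises the original bound (at the cost of inflating $\eps$ slightly). The identical scheme applied to the finite-dimensional invertible cocycle $\|U^u_\lambda(n,\omega)^{-1}\|$ (subadditive under reverse composition) yields the temperedness from below of $K^u_\lambda$. The principal technical burden is precisely this bookkeeping of random thresholds under the shift.
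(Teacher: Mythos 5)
You have correctly identified the central difficulty: the cocycle identity does not transfer control from $K^s_\lambda(\omega)$ to $K^s_\lambda(\theta_s\omega)$ because $U^s_\lambda(s,\omega)$ is not invertible, so temperedness of the stable constant is genuinely nontrivial. But neither of the two repairs you sketch actually closes the gap.

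Your second proposal, passing to the ``tempered hull'' $\tilde K^s_\lambda(\omega)=\sup_{s\in\mathbb{R}}K^s_\lambda(\theta_s\omega)e^{-\eps|s|/2}$, is circular: this supremum is a.s.\ finite precisely when $\limsup_{|s|\to\infty}\log^+K^s_\lambda(\theta_s\omega)/|s|\le\eps/2$, and since under ergodicity a positive random variable is either tempered or has $\limsup_{|s|\to\infty}\log^+X(\theta_s\omega)/|s|=+\infty$ (a dichotomy noted in Section \ref{s2}), a.s.\ finiteness of $\tilde K^s_\lambda$ is already equivalent to temperedness of $K^s_\lambda$ — the very thing to be proved. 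Your first proposal, ``rerun Kingman based at $\theta_s\omega$ and observe the random thresholds grow sublinearly by Birkhoff,'' is too coarse: Kingman gives a limit, not a quantitative rate, and the threshold $N(\omega)$ beyond which $\frac1n\log\|U^s_\lambda(n,\omega)\|\le\lambda_-+\eps$ is not itself expressed as a Birkhoff average, so you cannot conclude sublinear growth of $N(\theta_s\omega)$ this way. What is actually needed, and what the paper invokes, is Ruelle's quantitative refinement of Kingman (Corollary A.2 in \cite{Rue82}): there is a \emph{single} a.s.\ finite random variable $K_\eps(\omega)$ such that
\begin{equation*}
\log\|U_\lambda^s(n-m,\theta_m\omega)\|\le(n-m)\lambda_-+n\,\tfrac{\eps}{2}+K_\eps(\omega)\qquad\text{for all }n>m.
\end{equation*}
The crucial feature is the uniformity over the starting index $m$ with the \emph{same} $K_\eps(\omega)$; combined with the $D$-control for the fractional parts of $t$ and $s$, this is what lets one estimate $\|U^s_\lambda(t,\theta_s\omega)\|$ directly and show $K^s_\lambda(\theta_s\omega)e^{\Lambda s}\to 0$. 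Without this two-index statement, your bookkeeping remains a plan rather than a proof.

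On the unstable side your idea of applying Kingman to the inverse cocycle $\|U^u_\lambda(n,\omega)^{-1}\|$ is workable since $E^u(\omega)$ is finite dimensional and $U^u_\lambda$ is invertible there, but the paper gets the temperedness-from-below more directly: it first establishes (Lemma \ref{l2}) the a priori bound $\|U^u_\lambda(t,\omega)\|\le H^\Lambda_\omega e^{\Lambda t}$, and then uses the cocycle inequality $\|U^u_\lambda(t+s,\omega)\|\le\|U^u_\lambda(t,\theta_s\omega)\|\,\|U^u_\lambda(s,\omega)\|$ — valid in this direction without invertibility — to bound $1/K^u_\lambda(\theta_s\omega)$ in terms of $1/K^u_\lambda(\omega)$ up to an exponentially decaying factor. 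Either route works on the unstable side; the essential missing ingredient in your proposal is the quantitative Kingman bound on the stable side.
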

\begin{remark}
This nonuniform exponential dichotomy is also called nonuniform
pseudo-hyperbolicity; see Definition \ref{hyperbolic} in the next
section.
\end{remark}
\begin{proof}
We start with $K^u_\lambda$. By Lemma \ref{l2} we can assume that
\begin{equation}\label{eq9}
    \|U^u_\lambda(t,\omega)\|\le H_\omega^\Lambda  e^{\Lambda\, t}\quad
    \text{for }\quad t\ge 0,\omega\in\Omega
\end{equation}
where $\Lambda$ is chosen bigger than $\tilde\Lambda$ (see Lemma
\ref{l2}, (\ref{x1})) and $|\lambda_+|$. Sufficient for the
conclusion of the first part is to show that
\begin{equation*}
   \frac{1}{K^u(\omega)}:=\sup_{t\ge
    0}\frac{e^{(\lambda^u-\eps)t}}{\|U_\lambda^u(t,\omega)\|}
    =\sup_{t\in
    \mathbb{Q}}\frac{e^{(\lambda^u-\eps)t}}{\|U_\lambda^u(t,\omega)\|}
\end{equation*}
is a tempered random variable in $(0,\infty)$. Indeed, to see that
$1/K_\lambda^{+}$ is a random variable we note that  $t\to
\|U_\lambda^u(t,\omega)\|$ is continuous on $(0,\infty)$ by the
finite dimensionality of $E^u(\omega)$. In addition, by
$U(\cdot,\omega)\in C(\mathbb{R}^+,L_s(H))$ the norm
$\|U_\lambda^u(t,\omega)\|$  is bounded away from zero for $t\to
0$. Indeed, We have
\begin{equation*}
    \|U_\lambda^u(t,\omega)\|\ge \|U_\lambda^u(t,\omega)x\|,\qquad
    \|x\|=1,\,x\in E^u(\omega),
\end{equation*}
where the right hand side converges to one as $t\to 0$. Similarly,
we have on  $\Omega$
\begin{equation*}
    \limsup_{t\to\infty}\frac{e^{(\lambda_+-\eps)t}}{\|U_\lambda^u(t,\omega)\|}
    \le
    \limsup_{t\to\infty}\frac{e^{(\lambda_+-\eps)t}}{\|U_\lambda^u(t,\omega)x\|}<\infty,\qquad
    (\|x\|=1)
\end{equation*}
which follows from Theorem \ref{t3}.   According to Lemma \ref{l2}
\begin{equation*}
    e^{-\Lambda t}\le
    \frac{H_\omega^\Lambda }{\|U_\lambda^u(t,\omega)\|}\quad\text{for
    any } t\ge 0.
\end{equation*}
We then see that for $s>0$
\begin{align*}
  \frac{1}{ K^u_\lambda(\theta_s\omega)}e^{-2\Lambda s}&=\sup_{t\ge
    0}\frac{e^{(\lambda_u-\eps)t}}{\|U_\lambda^u(t,\theta_s\omega)\|}e^{-2\Lambda
    s}\\
    &\le \sup_{t\ge
    0}\frac{e^{(\lambda_+-\eps)t}
    e^{(\lambda_+-\eps)s}}{\|U_\lambda^u(t,\theta_s\omega)\|
    \|U_\lambda^u(s,\omega)\|}e^{-\Lambda
    s}H_\omega^\Lambda e^{-(\lambda_+-\eps)s}\\
    &\le
    \sup_{t\ge
    0}\frac{e^{(\lambda_+-\eps)(t+s)}}{
    \|U_\lambda^u(t+s,\omega)\|}e^{-\Lambda
    s}H_\omega^\Lambda e^{-(\lambda_+-\eps)s}\\
    &\le
    \frac{H_\omega^\Lambda }{K_\lambda^u(\omega)}e^{-\Lambda s}e^{-(\lambda_+-\eps)s}
\end{align*}
which goes to zero for $s\to\infty$. Thus the condition
(\ref{eq8}) gives the
first part of the conclusion.\\

\bigskip

 Now we show the existence and temperedness of $K_\lambda^s(\omega)$ on the stable
 space. We first show this for discrete time and then extend it to
 continuous time.
Define

\begin{equation*}
    K_\lambda^s(\omega):=\sup_{t\in
    \mathbb{R}^u}\frac{\|U_\lambda^s(t,\omega)\|}{e^{(\lambda_-+\eps)t}}.
\end{equation*}

We use the Kingman subadditive ergodic theorem (see Theorem A. 1
in Ruelle \cite{Rue82}).

Define $F_n(\om) = \log \| U_\lambda^s(n, \om)\|$. We can check
that $F_n$ satisfies the conditions in the Kingman subadditive
ergodic theorem. Therefore, together with Ruelle's MET
\cite{Rue82}, there exists a
$\{\theta_t\}_{t\in\mathbb{Z}}-$invariant measurable function
$F(\om)$ such that

\begin{equation}\label{kingman1}
 F(\om)=\lim_{n\to \infty} \frac1{n}F_n(\om)
 =  \lim_{n\to \infty} \log \frac{1}{n}\|U_\lambda^s(n, \om)\| \leq
 \lambda_-.
\end{equation}

Now set $G(\om) =\lambda_-$. Then $G(\om) \geq F(\om)$. As a
consequence of Kingman's subadditive ergodic theorem (see
Corollary A.2 in Ruelle \cite{Rue82}), for every $\eps>0$, there
is
 finite-valued random variable $K_{\eps}(\om)$   such that when $n>m$,
\begin{equation}\label{kingman2}
\log\|U_\lambda^s(n-m, \theta_m\om)\| \leq
(n-m)\lambda_-+n\frac{\eps}{2} +K_{\eps}(\om), \;\;  a.s.
\end{equation}

To see the temperedness of $K_\lambda^s$ is is sufficient to show
that

\begin{equation*}
    \lim_{s\to\infty} K_\lambda^s(\theta_s\omega)e^{\Lambda s}=0,
\end{equation*}

 for some  sufficiently large $\Lambda$. But this follows from the
 definition of $K_\lambda^s$ and from

 \begin{align*}
    \|U_\lambda^s(t,\theta_s\omega)\|&\le
    \|U(1+t-[t]-1-[s]+s,\theta_{[s]+[t]}\omega)\|\times\\
    &\times
    \|U_\lambda^s([t]-1,\theta_{1+[s]}\omega)\|\|U(1-s+[s],\theta_s\omega)\|\\
    &\le
    e^{D(\theta_{[s]+[t]+1}\omega)}e^{D(\theta_{[s]+[t]}\omega)}e^{K_\eps(\omega)+\eps([s]+1)+(\lambda_-+\frac{\eps}{2})([t]-1)}
e^{D(\theta_{[s]}\omega)}
\end{align*}
for $t>1$ and similarly for $t\in[0,1]$.   \hfill$\square$

\end{proof}

\medskip


We now are prepared to show that the random partial differential
equation
\begin{equation}\label{eqA1}
    \frac{du}{dt}+A(\theta_t\omega)u=F(\theta_t\omega,u),\qquad
    u(0)=x\in H,
\end{equation}
  via its solution mapping, defines a continuous random
dynamical system. Here the nonlinear term $F$ does not depend on
the gradient of $u$. A similar result for stochastic partial
differential equations is in \cite{Flandoli}.

\begin{theorem} (\textbf{Generation of cocycle})      \label{cocycle} \\
Let
\begin{equation*}
    F:\Omega\times H\to H
\end{equation*}
be a mapping such that $F(\cdot,x)$ is
$(\mathcal{F},\mathcal{B}(H))$ measurable for $x\in H$ and
$F(\omega,\cdot)$ is Lipschitz continuous for $\omega\in \Omega$
with a Lipschitz constant $L(\omega)$ such that
\begin{equation*}
    \int_a^bL(\theta_s\omega)ds<\infty\qquad \text{for
    }-\infty<a<b<\infty.
\end{equation*}
Then (\ref{eqA1}) has a unique (mild) solution on any interval
$[0,T]$ for any $\omega\in\Omega$ which generates a continuous
random dynamical system.
\end{theorem}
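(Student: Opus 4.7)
The plan is to pass to the mild (variation of constants) formulation
\[
u(t)=U_\omega(t,0)x+\int_0^t U_\omega(t,s)F(\theta_s\omega,u(s))\,ds,\qquad t\in[0,T],
\]
based on the fundamental solution $U_\omega(t,s)$ supplied by Theorem \ref{t1}, and to solve it for each fixed $\omega\in\Omega$ by a Banach fixed-point argument in $C([0,T],H)$. The integrand is well-defined and Bochner integrable by the strong continuity (\ref{eq3}), the exponential bound (\ref{eq6a}), the continuity of $F(\omega,\cdot)$, and the local integrability of $L(\theta_\cdot\omega)$ assumed in the hypotheses.

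For existence and uniqueness I would equip $C([0,T],H)$ with the $\omega$-dependent weighted norm
\[
\|u\|_\rho:=\sup_{t\in[0,T]}e^{-\mu_\omega t-\rho\int_0^t L(\theta_r\omega)\,dr}\|u(t)\|.
\]
Denoting by $\mathcal{T}$ the map sending $u$ to the right-hand side of the mild equation, (\ref{eq6a}) and Lipschitzness of $F$ give
\[
\|\mathcal{T}u(t)-\mathcal{T}v(t)\|\le C_\omega e^{\mu_\omega t}\int_0^t e^{-\mu_\omega s}L(\theta_s\omega)\|u(s)-v(s)\|\,ds,
\]
and a direct computation yields $\|\mathcal{T}u-\mathcal{T}v\|_\rho\le(C_\omega/\rho)\|u-v\|_\rho$. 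Choosing $\rho(\omega)>C_\omega$ makes $\mathcal{T}$ a contraction, producing a unique mild solution $u(\cdot,\omega,x)\in C([0,T],H)$; since $T$ is arbitrary the solution extends to all of $\mathbb{R}^+$.

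The cocycle identity (\ref{eq1}) for $\phi(t,\omega,x):=u(t,\omega,x)$ then follows from uniqueness: since $A(\theta_{\cdot+\tau}\omega)=A(\theta_\cdot\theta_\tau\omega)$ and similarly for $F$, after invoking the evolution property $U_\omega(t+\tau,s)=U_\omega(t+\tau,\tau)\circ U_\omega(\tau,s)$ and a change of the time variable, the two curves $t\mapsto\phi(t+\tau,\omega,x)$ and $t\mapsto\phi(t,\theta_\tau\omega,\phi(\tau,\omega,x))$ satisfy the same mild equation with the same initial value $\phi(\tau,\omega,x)$, hence coincide. Joint continuity in $(t,x)$ is standard: continuity in $x$ from a Gronwall estimate applied to the mild equation, and continuity in $t$ from the strong continuity of $U_\omega(\cdot,\cdot)$ together with absolute continuity of the Bochner integral.

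The main obstacle is joint measurability of $(t,\omega,x)\mapsto\phi(t,\omega,x)$. I would handle it by showing inductively that the Picard iterates
\[
u_0(t,\omega,x):=U_\omega(t,0)x,\qquad u_{n+1}(t,\omega,x):=U_\omega(t,0)x+\int_0^t U_\omega(t,s)F(\theta_s\omega,u_n(s,\omega,x))\,ds
\]
are jointly $\mathcal{B}([0,T])\otimes\mathcal{F}\otimes\mathcal{B}(H)$-measurable. The base case uses the measurability of $(t,\omega,x)\mapsto U_\omega(t,0)x$ already established before Theorem \ref{t1}; the inductive step combines Lemma \ref{l4}, the Carath\'eodory character of $F$ (making $F(\theta_s\omega,u_n(s,\omega,x))$ jointly measurable in $(s,\omega,x)$), strong continuity of $U_\omega(t,\cdot)$, and Fubini for the Bochner integral. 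Since the contraction argument shows $u_n\to u$ uniformly on $[0,T]$ for each $(\omega,x)$, joint measurability passes to the limit, so $\phi$ is the desired continuous random dynamical system.
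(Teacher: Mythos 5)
Your proposal follows the same route as the paper: pass to the mild (variation of constants) formulation, solve it by a Banach fixed-point argument in a weighted $C([0,T],H)$ norm, establish joint measurability by propagating it through the Picard iterates and passing to the uniform limit, and obtain the cocycle identity from uniqueness and the evolution property of $U_\omega$. The only notable cosmetic difference is your choice of weight $e^{-\mu_\omega t-\rho\int_0^t L(\theta_r\omega)\,dr}$, which gives the contraction constant $C_\omega/\rho$ in closed form, whereas the paper uses a pure exponential $e^{-\Lambda t}$ and invokes a Lebesgue/Dini argument to choose $\Lambda(\omega,T)$ large enough.
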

\begin{proof}
We consider the Polish space $C_{T,x}:=C([0,T],H)$ of continuous
functions $u$ with values in $H$ and $u(0)=x$. This space is
equipped with the norm
\begin{equation*}
    |||u|||=\sup_{t\in[0,T]}e^{-\Lambda t}\|u(t)\|\quad\text{for
    some  }\Lambda>0.
\end{equation*}
We consider the mapping
\begin{equation}\label{A3}
    \mathcal{T}_{x}(u)[t]:=U(t,\omega)u(0)
    +\int_0^tU(t-s,\theta_s\omega)F(\theta_s\omega,u(s))ds,\quad
    t\in[0,T],\quad u\in C_{T,x}.
\end{equation}
According to Amann \cite{Ama95} Page 46 f. we have that
$\mathcal{T}_{x}(u)\in C_{T,x}$. Due to (\ref{eq6a}) we obtain
\begin{equation*}
    \|U(t-s,\theta_s\omega)\|=\|U_\omega(t,s)\|\le
    C_\omega e^{k_0(t-s)}.
\end{equation*}
We now choose $\Lambda$ sufficiently large such that
\begin{equation}\label{eqA4}
  \max_{t\in[0,T]}  \int_0^te^{-(\Lambda-\mu)(t-s)}CL(\theta_s\omega)ds\le
    \frac{1}{2},\qquad \Lambda>k_0,
\end{equation}
where $\mu=\mu(\omega),\,C=C_\omega$. Indeed $s\to
C\,L(\theta_s\omega)$ is an integrable majorant for the integrand
in the above integral with respect to $\Lambda$. For
$\Lambda\to\infty$ the integrand goes to zero for almost all
$s\in[0,t]$. By the Lebesgue theorem the integrals go to zero for
$\Lambda\to\infty$ and for  any $t$. Note that for fixed $t$ the
integrals are monotone in $\Lambda$ such that, for sufficiently
large $\Lambda$, we have the inequality (\ref{eqA4}) by Dini's theorem.\\
We then have the contraction condition

\begin{align*}
    |||\mathcal{T}_{x}(u)-\mathcal{T}_{x}(v)|||\le
    \max_{t\in[0,T]}\int_0^te^{-(\Lambda-\mu)(t-s)}CL(\theta_s\omega)ds\,|||u-v|||\le\frac{1}{2}|||u-v|||.
\end{align*}
The Banach fixed point theorem gives us a solution of (\ref{eqA1})
which is continuous in $t$ for any $\omega\in\Omega,\,T\ge 0$ and
$\Lambda=\Lambda(\omega,T)$ sufficiently large.\\
The solution of (\ref{eqA1}) depends continuously on $x$ what
follows by the Gronwall Lemma from the Lipschitz continuity of
$F$.\\

The norms $|||\cdot|||$ are equivalent to the standard supremum
norm for every $\Lambda>0$. Hence we can construct the solution of
(\ref{eqA1}) by successive iteration of the operator
$\mathcal{T}_{x}$ starting with the {\em measurable} function
$u^0(t,\omega)\equiv x$. We see that the solution is  a pointwise
limit of measurable functions, hence measurable. Let
$\phi(t,\omega,x)$ be the solution operator for (\ref{eqA1}). The
measurable dependence on $x,\,t,\,\omega$ follows in the same way
as for the linear case, see above. The cocycle property follows by
\begin{align*}
    \phi(t+\tau,\omega,x)&=U(t+\tau,\omega)x+\int_0^{t+\tau}U(t+\tau-s,\theta_s\omega)F(\theta_s\omega,\phi(s,\omega,x))ds\\
    &=U(t,\theta_\tau\omega)(U(\tau,\omega)x+\int_0^\tau
    U(\tau-s,\theta_s\omega)F(\theta_s\omega,\phi(s,\omega,x))ds)\\
    &+
    \int_0^tU(t,\theta_s\theta_\tau\omega)F(\theta_s\theta_\tau\omega,\phi(s,\theta_\tau\omega,\phi(\tau,\omega,x)))ds.
\end{align*}
Hence $\varphi$ is a continuous random dynamical system.
\hfill$\square$
\end{proof}

%
\section{Invariant Manifolds}  \label{IM}

In this section, we consider a general nonlinear random
evolutionary equation in a Hilbert space $H$
\begin{equation}\label{eq3.1}
\frac{du}{dt} + A(\theta_t\omega) u= F(\theta_t\omega, u),
\end{equation}
with the random linear operator $A$ and nonlinear part $F$. We
assume that the linear equation
\begin{equation}\label{eq3.1*}
\frac{du}{dt} + A(\theta_t\omega) u= 0
\end{equation}
generates a linear random dynamical system $U(t, \omega)$ on $H$
for $t\geq 0$. We first introduce a weak hyperbolicity condition
on the linear dynamics.

\begin{definition}  \label{hyperbolic} {\it
$U(t, \omega)$ (or $u=0$) is said to be {\bf nonuniformly
pseudo-hyperbolic}  if there exists a $\theta_t$-invariant set
$\tilde \Omega \subset \Omega$ of full measure such that for each
$\omega\in \tilde \Omega$, the phase space $H$ splits into
\[
H=E^s(\omega)\oplus E^u(\omega)
\]
of closed subspaces satisfying
\begin{itemize}
\item[(i)] This splitting is invariant under $\;U(t, \omega)$:
\begin{align*}
&U(t, \omega) E^s(\omega) \subset E^s(\theta_t\omega) \\
&U(t, \omega) E^u(\omega) \subset E^u(\theta_t\omega) \\
\end{align*}
and $U(t, \omega)|_{E^u(\omega)}$ is an isomorphism from
$E^u(\omega)$ to $E^u(\theta_t \omega)$. \item[(ii)] There are
$\theta$-invariant random variables $\alpha(\omega) >
\beta(\omega)$, and a tempered random variable $K(\omega):\tilde
\Omega \to [1, \infty)$ such that
\begin{align}
\label{hstable}||U(t, \omega)\Pi^s(\omega)||& \leq K(\omega)
e^{\beta(\omega)t}
\quad\hbox{for } t\geq 0 \\
\label{hunstable}||\big(U(-t,
\theta_t\omega)|_{E^u(\theta_t\omega)}\big)^{-1} \Pi^u(\omega)||&
\leq K(\omega) e^{\alpha(\omega)t} \quad\hbox{for } t\leq 0,
\end{align}
where $\Pi^s(\omega)$ and $\Pi^u(\omega)$ are the measurable
projections associated with the splitting. For our special setting
of {\em ergodicity} we can assume that $\alpha>\beta$ are constant
on a $\{\theta_t\}_{t\in\mathbb{R}}$-invariant set of a full
measure.
\end{itemize}}
\end{definition}
Let
\[
U_\lambda^u(t, \omega)=U(t,\omega)|_{E^u(\omega)} \quad
\hbox{and}\quad U_\lambda^s(t, \omega)=U(t,\omega)|_{E^s(\omega)}
\]
where $\lambda$ generates some splitting of $H$, see Section
\ref{ED}. Then, the condition (i) in the above definition implies
that we may extend $U_\lambda^u(t, \omega)$ to be defined for
$t<0$ as
\[
U_\lambda^u(t, \omega)=\big(U_\lambda^u(-t,
\theta_t\omega)\big)^{-1}.
\]
One can easily verify that the cocycle property holds for the
extended system $U_\lambda^u(t,\omega)$ with $t\in \mathbb{R}$.

 \begin{remark}
 As $\omega$ varies,
$\beta(\omega)$ may be arbitrarily small and $K(\omega)$ may be
arbitrarily large. However, along each orbit $\{\theta_t\omega\}$,
$\alpha(\omega)$ and $\beta(\omega)$ are constant and $K(\omega)$
can increase only at a subexponential rate. Thus, the linear
system $U(t, \omega)$ is nonuniformly hyperbolic in the sense of
Pesin. As an example, let $U(t, \omega)$ be  an infinite
dimensional linear random dynamical system satisfying the
conditions of the following multiplicative ergodic theorem. Then,
the nonuniform pseudo-hyperbolicity we introduced here
automatically follows.
 \end{remark}

\vskip0.1in For the remainder of this paper, we assume that

\vskip0.1in \noindent{\bf Hypothesis A:} {\it $U(t, \omega)$ is
nonuniformly pseudo-hyperbolic. }

 \vskip0.1in For the nonlinear term $F(\omega, x)$ we assume
that

\bigskip
\noindent{\bf Hypothesis B:} {\it There is a ball, $
\mathcal{N}(\omega)=B(0, \rho(\omega))=\{u \in H\; | \; ||u|| <
\rho(\omega)\}, $ where $\rho:\Omega \to (0, \infty)$ is tempered
from below and $\rho(\theta_t\omega)$ is locally integrable, such
that $F(\omega, \cdot): \mathcal{N}(\omega) \to H$ is Lipschitz
continuous and satisfies $F(\omega, 0)=0$ and
\[
\vert |F(\omega, u)-F(\omega, v)\vert| \leq \tilde B_1(\omega)
\left(\vert|u\vert|^{\eps}+\vert|v\vert|^{\eps}\right)\vert|u-v\vert|
, \quad \omega \in \Omega, \; u, v \in \mathcal {N}(\omega).
\]
where $\tilde B_1(\omega)$ is a random variable  tempered from
above, $\tilde B_1(\theta_t\omega)$ is locally integrable  in $t$
and $\eps\in (0, 1]$.}

Later we can see that we can {\em extend} such an $F$ to
$\Omega\times H$ such that the assumptions of Theorem
\ref{cocycle} are satisfied.\\

Next, we introduce a modified equation by using a cut-off function
\cite{Arn98}. Let $\sigma(s)$ be a $C^\infty$ function from
$(-\infty, \infty)$ to $[0, 1]$ with
\[
\sigma(s)=1\;\;\hbox{for }\; |s| \leq 1, \quad \sigma(s)=0
\;\;\hbox{for }\; |s| \geq 2,
\]
\[
\sup_{s\in \mathbb{R}} |\sigma'(s)| \leq 2.
\]

Let $\rho:\Omega \to (0, \infty)$ be a random variable tempered
from below such that $\rho(\theta_t\omega)$ is locally integrable
in $t$. We consider a modification of $F(\omega, u)$. Let
\[
F_\rho(\omega,
u)=\sigma\left(\frac{|u|}{\rho(\omega)}\right)F(\omega, u).
\]

An elementary calculation gives
\begin{lemma}\label{cutoff}
\begin{itemize}
\item[(i)] $F_\rho(\omega, u)= F(\omega, u)$, for $|u| \leq
\rho(\omega)$ and $||F_\rho(\omega, u)|| \leq B_0(\omega)$, where
$B(\omega)>0$ is a random variable tempered from above and
$B(\theta_t\omega)$ is locally integrable in $t$ ; \item[(ii)]
there exists a random variable $B_1(\omega)>0$ tempered from
above, $B_1(\theta_t\omega)$ is locally integrable in $t$, such
that
\[
\vert|F_\rho(\omega, u)-F_\rho(\omega, v)\vert| \leq
B_1(\omega)\big(\rho(\omega)\big)^{\eps}||u-v||, \quad \text{for
all } u, v \in H.
\]
\end{itemize}
\end{lemma}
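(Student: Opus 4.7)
The proof is a standard cut-off argument, organized as follows.

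\emph{Part (i).} The identity $F_\rho(\omega,u)=F(\omega,u)$ on $\{\|u\|\le\rho(\omega)\}$ is immediate from the definition of $\sigma$, since $\sigma(s)=1$ for $|s|\le 1$. For the supremum bound, I will use that $F_\rho(\omega,u)=0$ whenever $\|u\|\ge 2\rho(\omega)$, so one only needs to estimate $\|F(\omega,u)\|$ on the ball of radius $2\rho(\omega)$ (this is where the choice of $\rho$ in the cut-off must be taken compatible with the domain where Hypothesis B provides $F$; if necessary one rescales so that $2\rho(\omega)$ lies inside $\mathcal N(\omega)$). Apply Hypothesis B with $v=0$, using $F(\omega,0)=0$, to obtain
\[
\|F(\omega,u)\|\le \tilde B_1(\omega)\|u\|^{\eps}\|u\|\le \tilde B_1(\omega)(2\rho(\omega))^{1+\eps}.
\]
Since $|\sigma|\le 1$, one sets $B_0(\omega):=2^{1+\eps}\tilde B_1(\omega)\rho(\omega)^{1+\eps}$. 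Temperedness and local integrability of $B_0(\theta_t\omega)$ follow from the corresponding properties of $\tilde B_1$ and $\rho$ (the product of tempered random variables is tempered, and the local integrability of $t\mapsto \tilde B_1(\theta_t\omega)\rho(\theta_t\omega)^{1+\eps}$ follows from the hypotheses, where for the integrability one may also use H\"older's inequality if needed).

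\emph{Part (ii).} For the Lipschitz estimate, decompose as
\[
F_\rho(\omega,u)-F_\rho(\omega,v)=\sigma\!\Big(\tfrac{\|u\|}{\rho(\omega)}\Big)\big(F(\omega,u)-F(\omega,v)\big)+\Big(\sigma\!\Big(\tfrac{\|u\|}{\rho(\omega)}\Big)-\sigma\!\Big(\tfrac{\|v\|}{\rho(\omega)}\Big)\Big)F(\omega,v).
\]
It suffices to bound each term when at least one of $u,v$ lies in the support of $F_\rho$, i.e., inside the ball of radius $2\rho(\omega)$; outside both supports the difference is zero. For the first term, Hypothesis B gives
\[
\Big\|\sigma\!\Big(\tfrac{\|u\|}{\rho(\omega)}\Big)\big(F(\omega,u)-F(\omega,v)\big)\Big\|\le \tilde B_1(\omega)(\|u\|^\eps+\|v\|^\eps)\|u-v\|\le 2^{1+\eps}\tilde B_1(\omega)\rho(\omega)^\eps\|u-v\|.
\]
For the second term, the mean value theorem together with $\sup|\sigma'|\le 2$ and the reverse triangle inequality $|\|u\|-\|v\||\le\|u-v\|$ yields
\[
\Big|\sigma\!\Big(\tfrac{\|u\|}{\rho(\omega)}\Big)-\sigma\!\Big(\tfrac{\|v\|}{\rho(\omega)}\Big)\Big|\le \frac{2}{\rho(\omega)}\|u-v\|,
\]
and combining with the bound $\|F(\omega,v)\|\le \tilde B_1(\omega)(2\rho(\omega))^{1+\eps}$ from Part (i) gives
\[
\Big\|\Big(\sigma(\cdot_u)-\sigma(\cdot_v)\Big)F(\omega,v)\Big\|\le 2^{2+\eps}\tilde B_1(\omega)\rho(\omega)^{\eps}\|u-v\|.
\]
Adding the two contributions produces the desired estimate with $B_1(\omega)=C\tilde B_1(\omega)$ for an absolute constant $C$, and the temperedness/local integrability of $B_1(\theta_t\omega)$ is inherited from that of $\tilde B_1$.

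\emph{Expected obstacle.} The only delicate point is making sure that the decomposition in (ii) is meaningful when one of $u,v$ lies in the annulus $\rho(\omega)<\|\cdot\|<2\rho(\omega)$, where $F$ is only guaranteed to exist by Hypothesis B if the radius of $\mathcal N(\omega)$ is at least $2\rho(\omega)$. This is handled either by choosing the cut-off scale smaller than half the radius of $\mathcal N(\omega)$, or, as the paragraph after Hypothesis B anticipates, by extending $F$ to all of $H$ (e.g.\ as zero outside the support of the cut-off), which does not affect $F_\rho$. Aside from this bookkeeping, every estimate is an elementary application of the Lipschitz bound and the properties of $\sigma$.
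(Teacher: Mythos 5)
The paper itself offers no proof of this lemma --- it says only ``An elementary calculation gives'' --- so you are supplying an argument where the authors supplied none. Your Part (i) and the decomposition
\[
F_\rho(\omega,u)-F_\rho(\omega,v)=\sigma\!\Big(\tfrac{\|u\|}{\rho}\Big)\big(F(\omega,u)-F(\omega,v)\big)+\Big(\sigma\!\Big(\tfrac{\|u\|}{\rho}\Big)-\sigma\!\Big(\tfrac{\|v\|}{\rho}\Big)\Big)F(\omega,v)
\]
in Part (ii), together with $\sup|\sigma'|\le 2$ and the reverse triangle inequality, are exactly the standard cut-off calculation that the authors have in mind, and the constants you extract are consistent with those used later (e.g.\ the factor $8$ in \eqref{radius}). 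You are also right to flag the domain mismatch: $\mathcal N(\omega)$ has radius $\rho(\omega)$ while the cut-off is supported on $\{\|u\|<2\rho(\omega)\}$, so $F$ must be (harmlessly) extended, or the cut-off scale halved, before the formula for $F_\rho$ even makes sense; the paper's remark after Hypothesis B about extending $F$ to all of $\Omega\times H$ is the authors' tacit acknowledgment of the same point.

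One place you should tighten the reduction in Part (ii). You reduce to the case where ``at least one of $u,v$ lies in the support,'' but the bounds you then write for the two terms of the decomposition --- $\|u\|^\eps+\|v\|^\eps\le 2^{1+\eps}\rho^\eps$ and $\|F(\omega,v)\|\le\tilde B_1(\omega)(2\rho)^{1+\eps}$ --- require that \emph{both} $u$ and $v$ lie in $\bar B(0,2\rho(\omega))$; if $v$ is far outside, $\|v\|^\eps$ is uncontrolled and $F(\omega,v)$ may not even be defined. The correct reduction is: if, say, $\|u\|<2\rho\le\|v\|$, replace $v$ by the point $\tilde v$ on the segment $[u,v]$ with $\|\tilde v\|=2\rho$; then $\|u-\tilde v\|\le\|u-v\|$ and $F_\rho(\omega,\tilde v)=0=F_\rho(\omega,v)$, so the estimate for $(u,\tilde v)$ implies the one for $(u,v)$. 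With that one sentence inserted, your argument is complete and is the intended ``elementary calculation.'' (A cosmetic remark: the temperedness of $B_0=2^{1+\eps}\tilde B_1\rho^{1+\eps}$ uses that $\rho$ is tempered from above as well as from below; this is implicit in the way $\rho$ is chosen in \eqref{radius} but is worth a word, since Hypothesis B only states temperedness from below.)
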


We now consider the following modified equation

\begin{equation}\label{eq3.3}
\frac{du}{dt} + A(\theta_t\omega) u= F_\rho(\theta_t\omega, u).
\end{equation}

Using Lemma \ref{cutoff}, this modified equation has a unique
global solution for each given initial value $u(0)=u_0$, thus
generates a random dynamical system.

\bigskip

We consider the Banach Space for
$\gamma(\omega)=\frac{\alpha(\omega)+\beta(\omega)}{2}$
\[
C_{\gamma}^- = \left\{u|u : \mathbb{R}^- \to E \hbox{ is
continuous and } \sup_{t\leq 0}|| e^{-\gamma(\omega) t } u (t) ||
< \infty \right\}
\]
with the norm $|u|_{\gamma}^-  = \sup_{t\leq 0}
||e^{-\gamma(\omega)t} u(t) ||$. Let $u(t,\omega, u^0)$ denote the
solution of equation (\ref{eq3.3}) and set
$$
M^u(\omega) = \left\{ u^0 | u(t,\omega, u^0) \hbox{ is defined for
all } t \leq 0 \hbox{ and } u (\cdot, \omega,  u^0 ) \in
C_{\gamma}^-\right\}.
$$

Then the set $M^u$ is called local unstable invariant set. If
$M^u(\omega)$ can be defined by a graph of a Lipschitz continuous
function then we call $M^u(\omega)$ Lipschitz pseudo-unstable
manifold for equation (\ref{eq3.3}).

\begin{theorem} (\textbf{Pseudo-unstable manifold theorem}) \label{thm3.1} \\
Assume that Hypotheses A and B hold and choose the tempered radius
$\rho(\omega)$ such that
\begin{equation}\label{radius}
0< \rho(\omega)<
\Big(\frac{\alpha(\omega)-\beta(\omega)}{8K(\omega)B_1(\omega)}\Big)^{\eps}.
\end{equation}Then there exists a Lipschitz pseudo-unstable manifold for equation
(\ref{eq3.3}) which is given by
$$
M^u(\omega) = \{p + h^u(\omega, p ) |p \in E^u(\omega)\}
$$
where $h^u(\omega,  \cdot): E^u(\omega) \to E^s(\omega)$ is
Lipschitz continuous and satisfies $h^u(\omega, 0) = 0$.
\end{theorem}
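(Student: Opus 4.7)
The plan is to use the Lyapunov–Perron method adapted to random dynamical systems. First, I would characterize the pseudo-unstable manifold variationally: an initial value $u^0$ lies in $M^u(\omega)$ if and only if there is a function $u\in C_\gamma^-$ satisfying $u(0)=u^0$ and the variation-of-constants formula obtained by splitting equation \eqref{eq3.3} along the invariant decomposition $H=E^u(\omega)\oplus E^s(\omega)$. Concretely, for $p\in E^u(\omega)$ define the Lyapunov–Perron operator
\begin{equation*}
\mathcal{J}(u,\omega,p)(t) \;=\; U_\lambda^u(t,\omega)p \;+\; \int_0^{t} U_\lambda^u(t-s,\theta_s\omega)\Pi^u(\theta_s\omega) F_\rho(\theta_s\omega,u(s))\,ds \;+\; \int_{-\infty}^{t} U_\lambda^s(t-s,\theta_s\omega)\Pi^s(\theta_s\omega) F_\rho(\theta_s\omega,u(s))\,ds,
\end{equation*}
for $t\le 0$. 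Here the first two integrals are well defined because $U_\lambda^u(\cdot,\omega)$ is the extension to negative times constructed right after Definition \ref{hyperbolic}, and the stable integral converges at $-\infty$ because of the decay $\|U_\lambda^s(t-s,\theta_s\omega)\Pi^s(\theta_s\omega)\|\le K(\omega)e^{\beta(t-s)}$ combined with the weight $e^{-\gamma t}$.

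Next I would show that for $\rho(\omega)$ satisfying \eqref{radius}, the operator $\mathcal{J}(\cdot,\omega,p)$ is a uniform contraction on $C_\gamma^-$ with $\gamma=\tfrac{\alpha+\beta}{2}$. Using Hypothesis A together with Lemma \ref{cutoff}(ii), a direct estimate yields
\begin{equation*}
|\mathcal{J}(u,\omega,p)-\mathcal{J}(v,\omega,p)|_\gamma^- \;\le\; K(\omega)B_1(\omega)\rho(\omega)^{\eps}\Bigl(\tfrac{1}{\alpha-\gamma}+\tfrac{1}{\gamma-\beta}\Bigr)|u-v|_\gamma^- \;=\;\tfrac{4K(\omega)B_1(\omega)\rho(\omega)^{\eps}}{\alpha-\beta}\,|u-v|_\gamma^-,
\end{equation*}
so the restriction in \eqref{radius} gives a Lipschitz constant at most $\tfrac12$. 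A similar estimate shows $\mathcal{J}(\cdot,\omega,p)$ maps $C_\gamma^-$ into itself (the free term $U_\lambda^u(\cdot,\omega)p$ lies in $C_\gamma^-$ since $\gamma<\alpha$). The Banach fixed-point theorem then produces a unique $u^*(\cdot,\omega,p)\in C_\gamma^-$.

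Set $h^u(\omega,p):=\Pi^s(\omega)\,u^*(0,\omega,p)$, so that the graph $\{p+h^u(\omega,p):p\in E^u(\omega)\}$ equals $M^u(\omega)$. Lipschitz continuity in $p$ follows from a second contraction argument: subtracting the fixed-point equations for two unstable data $p_1,p_2$ gives $|u^*(\cdot,\omega,p_1)-u^*(\cdot,\omega,p_2)|_\gamma^-\le 2K(\omega)\|p_1-p_2\|$, whence $\|h^u(\omega,p_1)-h^u(\omega,p_2)\|\le 2K(\omega)\|p_1-p_2\|$. The property $h^u(\omega,0)=0$ is immediate from $F_\rho(\omega,0)=0$. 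Measurability of $h^u$ in $\omega$ follows from the fact that the fixed point is obtained as a pointwise limit of measurable Picard iterates, using strong measurability of the projections $\Pi^{u/s}(\omega)$ established in Theorem \ref{t3} and Lemma \ref{l4}. Finally, invariance $u^*(\cdot+\tau,\omega,p)=u^*(\cdot,\theta_\tau\omega,\tilde p)$ for a suitable $\tilde p\in E^u(\theta_\tau\omega)$ is verified by plugging into the Lyapunov–Perron equation and using the cocycle property \eqref{eq7} together with the invariance of $\Pi^{u/s}$ under $U$.

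The main obstacle is handling the random, $\omega$-dependent quantities $K(\omega),\rho(\omega),B_1(\omega)$: one must ensure that the integrals defining $\mathcal{J}$ converge pathwise on a $\theta$-invariant set of full measure, that the Banach space $C_\gamma^-$ is interpreted with the random exponent $\gamma=\gamma(\omega)$, and that Picard iteration preserves measurability in $\omega$. Under the ergodicity hypothesis $\alpha,\beta$ are constants on a full-measure invariant set, which collapses most of the $\omega$-dependence in the spectral gap and leaves only the temperedness of $K$ and $\rho$ to deal with; this is precisely why the estimate \eqref{radius} can be arranged and why the fixed point depends measurably on $\omega$.
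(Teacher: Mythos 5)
Your argument follows the same Lyapunov--Perron construction as the paper, with the same space $C_\gamma^-$, the same fixed-point operator, the same $1/2$-contraction, and the same Lipschitz estimate $\|h^u(\omega,p_1)-h^u(\omega,p_2)\|\le 2K(\omega)\|p_1-p_2\|$; your added remarks on measurability via Picard iterates are consistent with the framework established earlier in the paper. One minor imprecision: in your contraction estimate the factor $K(\omega)B_1(\omega)\rho(\omega)^{\eps}$ appears as a constant in front of $\bigl(\tfrac{1}{\alpha-\gamma}+\tfrac{1}{\gamma-\beta}\bigr)$, but the integrand actually carries $K(\theta_\tau\omega)B_1(\theta_\tau\omega)\rho^{\eps}(\theta_\tau\omega)$; the correct route, as in the paper, is to use the radius condition \eqref{radius} pointwise along the orbit (with $\alpha,\beta$ constant) to bound the integrand by $(\alpha-\beta)/8$ before integrating, which then yields the same $1/2$ bound.
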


\begin{remark} When $\alpha(\omega) <0$, the assumption $F(\omega, 0)=0$
can be removed. This corresponds to the inertial manifold in
deterministic case. If $F$ is continuously differentiable  in $u$,
then $h^u$ is continuously differentiable in $u$. Note that $h^u$
and thus the local manifold $M^u$ depend on $\rho$. The proof
below shows the existence of a   unstable manifold for the
truncated equation (\ref{eq3.3}), and as in \cite{CarLanRob01}, it
can be shown that this is indeed a local unstable manifold for the
original equation \eqref{eq3.1}
\end{remark}

\begin{proof}  We use the Lyapunov and Perron approach to show this theorem.
Then $M^u(\omega)$ is nonempty since $u=0\in M^u(\omega)$, and
invariant for the random dynamical system generated by
(\ref{eq3.3}). We will prove that $M^u(\omega)$ is given by the
graph of a Lipschitz function over $E^u(\omega)$.\\

We first claim that for $u(\cdot)\in C_{\gamma}^-(\omega)$ $u(0)
\in M^u(\omega)$ if and only if $u(t)$ satisfies
\begin{align}\label{eq3.5}\begin{split}
u(t) = U_\lambda^u(t, \omega) \xi &+ \int_0^t U_\lambda^u(t-\tau,
\theta_\tau\omega)
 \Pi^u F_\rho(\theta_\tau\omega, u) d\tau\\
&+\int_{-\infty}^t U_\lambda^s(t-\tau, \theta_\tau\omega) \Pi^s
F_\rho(\theta_\tau\omega, u)d\tau,
\end{split}
\end{align}
where $\xi = \Pi^u u(0)$.

To prove this claim, we first let $u(0)=u^0\in M^u(\omega)$. By
using the variation of constants formula, we have
\begin{equation} \label{eq3.6}
\Pi^u u(t) = U_\lambda^u(t, \omega) \Pi^u u^0 + \int_0^t
U_\lambda^u(t-\tau, \theta_\tau\omega) \Pi^u F_\rho(\theta_\tau
\omega,u) d\tau,
\end{equation} and for $t_0\leq t$
\begin{align}\begin{split}\label{eq3.7}
\Pi^su(t) &=U_\lambda^s(t-t_0, \theta_{t_0}\omega)
\Pi^su(t_0)+\int_{t_0}^t U_\lambda^s(t-\tau, \theta_\tau\omega)
\Pi^sF_\rho(\theta_\tau \omega,u) d\tau.\end{split}
\end{align}
Since $u \in C_{\gamma}^-$, we have, for $t_0 < t , t_0<0$, that
\begin{align*}
&||U_\lambda^s(t-t_0, \theta_{t_0}\omega)\Pi^s u (t_0)||
\leq K(\theta_{t_0}\omega) e^{\beta(\omega)(t- t_0)} e^{\gamma(\omega)t_0} |u|_{\gamma}^-\\
&\leq e^{\beta(\omega) t}\Big(K(\theta_{t_0}\omega) e^{(\gamma
(\omega)-\beta(\omega))t_0}\Big) |u|_{\gamma} \to 0 \quad \hbox{
as } \quad t_0 \to - \infty,
\end{align*}
where we used the facts that $\beta(\omega)< \gamma(\omega)$ and
$K(\omega)$ is tempered from above. Taking the limit $t_0 \to -
\infty$ in (\ref{eq3.7}),
\begin{equation}\label{eq3.8}
\Pi^s u (t) = \int_{-\infty}^t U_\lambda^s(t-\tau,
\theta_\tau\omega) \Pi^sF_\rho(\theta_\tau \omega,u) d\tau.
\end{equation}

Combining (\ref{eq3.6}) and (\ref{eq3.8}), we obtain
(\ref{eq3.5}). The converse follows from a direct computation.

Let ${\mathcal J}^u(u, p, \omega)$ be the right hand side of
equality (\ref{eq3.5}).  Using (\ref{hstable}), (\ref{hunstable}),
Lemma \ref{cutoff}, and (\ref{radius}),  we have for $u, \bar u
\in C_{\gamma}^-$
\begin{align*}
&|{\mathcal J}^u (u, p, \omega) - {\mathcal J}^u(\bar u, p,
\omega)|_{\gamma}^-\\
&\quad\leq \sup_{t\leq 0}\Big\{ \int_t^0
e^{\big(\alpha(\omega)-\gamma(\omega)\big)(t-\tau)}
K(\theta_\tau\omega)B_1(\theta_\tau\omega)\rho^\eps(\theta_\tau\omega)
d\tau\\
&\qquad\qquad + \int_{-\infty}^t
e^{-\big(\gamma(\omega)-\beta(\omega)\big)(t-\tau)}
K(\theta_\tau\omega)B_1(\theta_\tau\omega)\rho^\eps(\theta_\tau\omega)
d\tau   \Big\}|u-\bar u |_{\gamma}^-\\
&\quad \leq \frac1{2}  |u-\bar u |_{\gamma}^-
\end{align*}
and
$$
|{\mathcal J}^u(u, p, \omega) - {\mathcal J}^u (u, \bar p,
\omega)|_{\gamma}^- \leq K(\omega)||p - \bar p||.
$$
Using the uniform contraction mapping principle, we have that for
each $p \in E^u(\omega)$ ${\mathcal J}^u$ has a fixed point, thus
equation (\ref{eq3.5}) has a unique solution $u(\cdot, p,
\omega)\in C_{\gamma}^-$ which is Lipschitz continuous in $p$ and
satisfies
\begin{align*}
|u(\cdot, p,  \omega)- u(\cdot, \bar p,  \omega)|_{\gamma}^-\leq
2K(\omega) || p - \bar p||.
\end{align*}

Let
\begin{align*}
h^u (\omega, p) = \Pi^s u(0, p,\omega) = \int_{-\infty}^0
U_\lambda^s(-\tau, \theta_\tau\omega) \Pi^s
F_\rho(\theta_\tau\omega, u(\tau,p, \omega)) d\tau.
\end{align*} Then $h^u(\omega, 0) = 0$ and $h^u(\omega, \cdot)$ is Lipschitz
continuous.

By the definition of $h^u$ and the fact that $u^0\in M^u(\omega)$
if and only if (\ref{eq3.5}) has a unique solution $u(\cdot)$ in
$C^-_{\gamma}$ with $u(0)=u^0=p+h^u(\omega, p)$ for some $p\in
E^u(\omega)$, it follows that

$$ M^u(\omega) = \{ p + h^u (\omega, p)
|p\in E^u(\omega) \}.
$$
This completes the proof of the pseudo-unstable manifold theorem.
\hfill$\square$
\end{proof}

\begin{theorem} (\textbf{Pseudo-stable manifold theorem}) \label{thm3.2} \\
Assume that Hypotheses A and B hold and choose the same tempered
radius as in Theorem \ref{thm3.1}. Then there exists a Lipschitz
pseudo-stable manifold for equation (\ref{eq3.3}) which is given
by
$$
M^s(\omega) = \{q + h^s(\omega, q ) |q \in E^s(\omega)\}
$$
where $h^s(\omega,  \cdot): E^s(\omega) \to E^u(\omega)$ is
Lipschitz continuous and satisfies $h^u(\omega, 0) = 0$
\end{theorem}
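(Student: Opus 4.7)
The plan is to mirror the Lyapunov--Perron argument of Theorem~\ref{thm3.1}, with the roles of stable and unstable directions interchanged and the time direction reversed. First I would introduce the forward-time Banach space
\[
C_\gamma^+ = \left\{ u : \mathbb{R}^+ \to H \text{ continuous with } |u|_\gamma^+ := \sup_{t\geq 0} \|e^{-\gamma(\omega)t} u(t)\| < \infty \right\},
\]
with $\gamma(\omega) = (\alpha(\omega)+\beta(\omega))/2$, and define
\[
M^s(\omega) = \{ u^0 \in H \mid u(\cdot,\omega,u^0) \text{ is defined for all } t\geq 0 \text{ and lies in } C_\gamma^+ \}.
\]
Since $F_\rho(\omega,0)=0$, the trivial solution shows $0\in M^s(\omega)$, and invariance of $M^s$ under the cocycle generated by \eqref{eq3.3} is immediate from the cocycle property.

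Next, I would establish the Lyapunov--Perron characterization: for $u\in C_\gamma^+$, $u(0)\in M^s(\omega)$ if and only if
\begin{align*}
u(t) &= U_\lambda^s(t,\omega)\,q + \int_0^t U_\lambda^s(t-\tau,\theta_\tau\omega)\,\Pi^s F_\rho(\theta_\tau\omega,u)\,d\tau \\
&\quad - \int_t^{\infty} U_\lambda^u(t-\tau,\theta_\tau\omega)\,\Pi^u F_\rho(\theta_\tau\omega,u)\,d\tau,
\end{align*}
where $q = \Pi^s u(0)$. The stable component follows directly from the variation of constants formula. For the unstable component, I would use the variation of constants formula from $t$ to $t_0>t$, solve for $\Pi^u u(t)$ using the invertibility of $U_\lambda^u$ on $E^u$ provided by Hypothesis~A, and let $t_0\to\infty$. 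The boundary term $U_\lambda^u(t-t_0,\theta_{t_0}\omega)\Pi^u u(t_0)$ is controlled by $K(\theta_{t_0}\omega) e^{\alpha(\omega)(t-t_0)} e^{\gamma(\omega)t_0}|u|_\gamma^+$, which vanishes as $t_0\to\infty$ thanks to $\gamma<\alpha$ and the temperedness of $K$.

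I would then define $\mathcal{J}^s(u,q,\omega)$ to be the right-hand side of the integral equation and show it is a uniform contraction on $C_\gamma^+$. Using \eqref{hstable}, \eqref{hunstable}, the Lipschitz bound from Lemma~\ref{cutoff}(ii), and the radius choice \eqref{radius}, the contraction constant estimate becomes
\[
\sup_{t\geq 0}\!\left\{\!\int_0^t\! e^{-(\gamma-\beta)(t-\tau)} K B_1 \rho^\eps\,d\tau + \int_t^{\infty}\! e^{(\alpha-\gamma)(t-\tau)} K B_1 \rho^\eps\,d\tau \!\right\} \leq \frac{1}{2},
\]
where all random variables are evaluated at $\theta_\tau\omega$. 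Lipschitz dependence on $q\in E^s(\omega)$ yields $|u(\cdot,q,\omega)-u(\cdot,\bar q,\omega)|_\gamma^+ \leq 2K(\omega)\|q-\bar q\|$. Setting
\[
h^s(\omega,q) := \Pi^u u(0,q,\omega) = -\int_0^{\infty} U_\lambda^u(-\tau,\theta_\tau\omega)\,\Pi^u F_\rho(\theta_\tau\omega,u(\tau,q,\omega))\,d\tau
\]
produces a Lipschitz map $E^s(\omega)\to E^u(\omega)$ with $h^s(\omega,0)=0$, and the graph representation $M^s(\omega)=\{q+h^s(\omega,q)\mid q\in E^s(\omega)\}$ follows by construction.

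The main obstacle is the forward improper integral over $E^u$: one must verify that it makes sense for $u\in C_\gamma^+$ and that the random exponential dichotomy bounds survive integration against the random Lipschitz constant $B_1(\theta_\tau\omega)$. This is precisely where the temperedness of $K$ and $B_1$, the choice of $\gamma$ strictly between $\alpha$ and $\beta$, and the smallness condition \eqref{radius} on the cut-off radius conspire to keep the contraction constant below $1/2$ uniformly in $\omega$; the rest of the argument is then a direct transcription of the Lyapunov--Perron scheme used in Theorem~\ref{thm3.1}.
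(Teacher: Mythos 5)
Your proposal is correct and follows essentially the same Lyapunov--Perron scheme as the paper: the same forward-time weighted Banach space, the same integral equation (stable integral over $[0,t]$, unstable integral over $[t,\infty)$), the same contraction estimate driven by \eqref{hstable}, \eqref{hunstable}, Lemma~\ref{cutoff}, and \eqref{radius}, and the same definition $h^s(\omega,q)=\Pi^u u(0,q,\omega)$. You actually supply slightly more detail than the paper (the derivation of the integral characterization via inversion of $U^u_\lambda$ and the limit $t_0\to\infty$, and the explicit contraction computation), and your weight $e^{-\gamma(\omega)t}$ and $h^s=\Pi^u u(0,\cdot,\cdot)$ quietly correct two sign/superscript typos in the printed text; the mathematical content is the same.
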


\begin{remark}
Restricting $M^u(\omega)$ and $M^s(\omega)$ to a random ball
$\mathcal{N}(\omega)$ with center zero and a random radius
tempered from below gives local random pseudo-unstable and
pseudo-stable manifolds for equation (\ref{eq3.1}), respectively,
see Lu and Schmalfu{\ss} \cite{r-9}.
\end{remark}

\begin{proof}  When $H$ is a finite dimensional space, one can simply reverse the time to get
the pseudo stable manifold by using the pseudo-unstable manifold
theorem. For an infinite dimensional space $H$, since the random
dynamical systems are generally defined only for $t\geq 0$, the
pseudo-unstable manifold theorem cannot be applied here as for the
finite dimensional systems. Define the following Banach space for
$\gamma(\omega)=\frac{\alpha(\omega)+\beta(\omega)}{2}$
$$
C_{\gamma}^u = \left\{ u | u : \mathbb{R}^u \to E \hbox{ is
continuous and } \sup_{t\geq 0} || e^{\gamma t} u (t) || < \infty
\right\}
$$
with the norm $|u|_{\gamma}^u = \sup_{t \geq 0} || e^{\gamma t} u (t) ||$. \\

Let
\[
M^s(\omega)=\big\{u^0\;:\; u(\cdot, \omega,  u^0) \in
C_{\gamma}^u\big\}.
\]It is easy to see that $M^s(\omega)$ is nonempty and
invariant for the random dynamical system generated by equation
(\ref{eq3.3}). We will show that $M^s(\omega)$ is the graph of a
Lipschitz function over $E^s(\omega)$. First, a similar
computation as in the proof of Theorem \ref{thm3.1} gives that,
for $u(\cdot)\in C_{\gamma}^+$, $u(0) \in M^s(\omega)$ if and only
if $u(t)$ satisfies
\begin{align}
\begin{split}\label{eq3.9} u(t) &= U_\lambda^s(t, \omega)q +
\int_0^t U_\lambda^s(t-\tau, \theta_\tau\omega) \Pi^s
F_\rho(\theta_\tau\omega, u(\tau)) \\
&\qquad\quad\qquad + \int_\infty^t U_\lambda^u(t-\tau,
\theta_\tau\omega) \Pi^u F_\rho(\theta_\tau\omega, u(\tau)) d\tau,
\end{split}
\end{align} where $q
= \Pi^s u (0)$.

We will show that for each $q \in E^s(\omega)$, equation
(\ref{eq3.9}) has a unique solution in $C_{\gamma}^+$. To see
this, let ${\mathcal J}^s(u, q, \omega)$ be the right hand side of
(\ref{eq3.9}). A simple calculation gives that ${\mathcal J}^s$ is
well-defined from $C_{\gamma}^+$ to itself for each fixed
$\omega\in\Omega$ and $q\in E^s(\omega)$. For any $u,\bar u \in
C_{\gamma}^+$, using (\ref{hstable}), (\ref{hunstable}), Lemma
\ref{cutoff}, and (\ref{radius}),  we have
\begin{equation}\label{eq3.11}
|{\mathcal J}^s (u,q,\omega) - {\mathcal J}^s(\bar u, q,
\omega)|_{\gamma}^+ \leq \frac12 | u - \bar u |_{\gamma}^+
\end{equation}
and
\[
|{\mathcal J}^s (u,q,\omega) - {\mathcal J}^s(u, \bar q,
\omega)|_{\gamma}^+ \leq K(\omega) \vert| q - \bar q \vert|.
\]

Using the uniform contraction principle, we have that for each
$\omega\in \Omega$ and $q \in E^s(\omega)$ equation (\ref{eq3.9})
has a unique solution $u(\cdot, q, \omega) \in C_{\gamma}^+$ which
is Lipschitz continuous in $q$ and satisfies
\begin{equation}\label{eq4.6}
|u(\cdot, q, \omega) - w(\cdot, \bar q, \omega) |^+_{\gamma} \leq
2K(\omega)|| q - \bar q ||.
\end{equation}

Let $h^s(\omega, q)=\Pi^su(0, q, \omega)$. Then
\begin{align*}
h^s(q,\omega) = \int_\infty^0 U_\lambda^u(-\tau,
\theta_\tau\omega) \Pi^u F_\rho(\theta_\tau \omega, u(\tau,
q,\omega) d\tau,
\end{align*}
$h^s(\omega, 0)=0$, $h^s(\omega, q)$ is Lipschitz in $q$. Using
(\ref{eq3.9}) and the definitions of $M^s(\omega)$ and $h^s$, we
\[
M^s (\omega) = \{ q+ h^s(\omega, q) \; :\; q \in E^s(\omega) \}.
\]
This  proves the pseudo-stable manifold theorem. \hfill$\square$
\end{proof}

\section{An application} \label{appl}

In this section we will illustrate the above random invariant
manifold theory by applying it to an example of stochastic partial
differential equations.

Let $H$ be a separable Hilbert spaces with scalar product
$(\cdot,\cdot)$ and norm $|\cdot|$. Consider an (unbounded)
operator $A:D(A)=:H_1\rightarrow H$ which is supposed that $-A$ is
the generator of a analytic $C_0$--semigroup $\{S_A(t)\}_{t\geq0}$
on $H$, being $S_A(t)$ compact for all $t>0,$ and
so that $-A$ possesses infinitely many eigenvalues%
\[
\mu_{1}\geq\cdots\geq\mu_{j}\geq\mu_{j+1}\geq\mu_{j+2}\geq\cdots\text{
\ \ (with \ }\mu_{j}\rightarrow-\infty\text{ \ as \ }j\rightarrow
\infty\text{)},%
\]
with the associated eigenvalues $\{e_{j}\}_{j\geq1}$ forming a
complete orthonormal basis of $H.$

For instance, we can consider as operator $A$ the one given in
\eqref{opA} which satisfies the homogeneous Dirichlet boundary
conditions (\ref{opA2}), assuming that is symmetric and has a
compact resolvent. Then the above assumptions are satisfied with
$H=L^2(\mathcal{O})$.

On the other hand, assume that $f$ is a Lipschitz continuous
operator from $H$
to $H$, i.e.%
\[
\Vert f(u_{1})-f(u_{2})\Vert\leq L_{f}\Vert
u_{1}-u_{2}\Vert,\qquad\text{for all \ }u_{1},\,u_{2}\in H,
\]
$w_{1},\cdots,w_{N}\,$ are one-dimensional mutually independent
standard Wiener processes over the same probability space, and
$D_{i}\in\mathcal{L}(H)$ for $i=1,\cdots,N.$ Then, we consider the
following semilinear stochastic partial differential equation with
multiplicative Stratonovich linear noise
\begin{equation}
dX+AX\,dt=f(X)dt+\sum_{i=1}^{N}D_{i}X\circ dw_{i}. \label{seq41}%
\end{equation}
The operators $D_{i}$ generate $C_{0}$-groups which we will denote
by
$S_{D_{i}}$. If, in addition, we suppose the operators $A,\,D_{1}%
,\,\cdots,\,D_{N}$ \ mutually commute (what implies that these
groups and the semigroup $S_A(t)$ generated by $A$ are also
mutually commuting), then this stochastic equation will generate a
random dynamical system by performing a suitable transformation
(see Lemma \ref{lm1}). \newline

We consider the one-dimensional stochastic differential equation
\begin{equation}
dz=-\nu\,z\,dt+dw(t) \label{eqx3}%
\end{equation}
for some $\nu>0$. This equation has a random fixed point in the
sense of random dynamical systems generating a stationary solution
known as the stationary Ornstein-Uhlenbeck process.

\begin{lemma}
\label{lx1} (\cite{CaKloSchm04}) Let $\nu$ be a positive number
and consider the probability space as in Section \ref{s2}. There
exists a $\{\theta_{t}\}_{t\in\mathbb{R}}$-invariant subset
$\bar{\Omega}\in \mathcal{F}$ of
$\Omega=C_{0}(\mathbb{R},\mathbb{R})$ of full measure such that
\begin{equation}
 \lim_{t\rightarrow\pm\infty}\frac{|\omega(t)|}{t}=0,
\end{equation}
and, for such $\omega$, the random variable given by
\[
z^{\ast}(\omega):=-\nu\int_{-\infty}^{0}e^{\nu\tau}\omega(\tau)d\tau
\]
is well defined. Moreover, for $\omega\in\bar{\Omega}$, the
mapping
\[%
\begin{split}
(t,\omega)\rightarrow z^{\ast}(\theta_{t}\omega)  &
=-\nu\int_{-\infty
}^{0}e^{\nu\tau}\theta_{t}\omega(\tau)d\tau\\
& =-\nu\int_{-\infty}^{0}e^{\nu\tau}\omega(t+\tau)d\tau+\omega(t)
\end{split}
\]
is a stationary solution of (\ref{eqx3}) with continuous
trajectories. In addition, for $\omega\in\bar{\Omega}$
\[%
\begin{split}
&  \lim_{t\rightarrow\pm\infty}\frac{|z^{\ast}(\theta_{t}\omega)|}%
{|t|}=0,\qquad\lim_{t\rightarrow\pm\infty}\frac{1}{t}\int_{0}^{t}z^{\ast
}(\theta_{\tau}\omega)d\tau=0,\\
&
\lim_{t\rightarrow\pm\infty}\frac{1}{t}\int_{0}^{t}|z^{\ast}(\theta_{\tau
}\omega)|d\tau=\mathbb{E}|z^{\ast}|<\infty.
\end{split}
\]

\end{lemma}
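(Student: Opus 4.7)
The plan is to proceed in four stages: construct the exceptional set $\bar\Omega$, verify convergence of the defining integral and identify $z^*(\theta_t\omega)$ as a stationary mild solution of (\ref{eqx3}), then derive the three asymptotic limits.

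First, I would take $\bar\Omega$ as the intersection of a $\theta$-invariant set of full $\mathbb{P}$-measure on which the strong law (or law of the iterated logarithm) for Brownian motion forces $|\omega(t)|/|t|\to 0$ as $t\to\pm\infty$. Invariance under the Wiener shifts $\theta_s\omega(\cdot)=\omega(\cdot+s)-\omega(s)$ is automatic since $|\theta_s\omega(t)|/|t|\le(|\omega(t+s)|+|\omega(s)|)/|t|\to 0$. For $\omega\in\bar\Omega$ there is a constant $C_\omega$ with $|\omega(\tau)|\le C_\omega(1+|\tau|)$, and combined with the exponential weight $e^{\nu\tau}$ at $\tau\to-\infty$ this gives absolute convergence of $\int_{-\infty}^0 e^{\nu\tau}\omega(\tau)\,d\tau$, so $z^*$ is well-defined and $\mathcal{F}$-measurable. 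An integration by parts in $\tau$ over $[-T,0]$ followed by $T\to\infty$ rewrites $z^*(\omega)=\int_{-\infty}^0 e^{\nu\tau}\,dw(\tau)$ as a Wiener integral with variance $1/(2\nu)$, hence $z^*$ is centered Gaussian and $\mathbb{E}|z^*|<\infty$.

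Second, I would expand $z^*(\theta_t\omega)=-\nu\int_{-\infty}^0 e^{\nu\tau}[\omega(t+\tau)-\omega(t)]\,d\tau$; using $\nu\int_{-\infty}^0 e^{\nu\tau}\,d\tau=1$, the boundary term factors out to yield the displayed identity. Changing variable $\sigma=t+\tau$ and integrating by parts once more converts this identity to $z^*(\theta_t\omega)=e^{-\nu t}z^*(\omega)+\int_0^t e^{-\nu(t-\sigma)}\,dw(\sigma)$, which is precisely the variation-of-constants mild solution of (\ref{eqx3}) with initial datum $z^*(\omega)$. Continuity of $t\mapsto z^*(\theta_t\omega)$ is inherited from the continuity of the stochastic convolution and from uniform convergence of the defining integral on compact time windows, while stationarity is built in through the $\theta$-invariance of $\mathbb{P}$.

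Third, for the asymptotics, $|z^*(\theta_t\omega)|/|t|\to 0$ follows directly from the representation $z^*(\theta_t\omega)=-\nu\int_{-\infty}^0 e^{\nu\tau}\omega(t+\tau)\,d\tau+\omega(t)$: the boundary term is $o(|t|)$ by construction of $\bar\Omega$, while dominated convergence applied to the integral (with the exponential weight and subexponential growth of $\omega$) handles the rest. The averaged limits are then immediate from the Birkhoff ergodic theorem: since $(\Omega,\mathcal{F},\mathbb{P},\theta)$ is ergodic and $z^*,|z^*|\in L^1(\mathbb{P})$, one has $\frac{1}{t}\int_0^t z^*(\theta_\tau\omega)\,d\tau\to\mathbb{E}z^*=0$ (by symmetry of the Gaussian) and $\frac{1}{t}\int_0^t|z^*(\theta_\tau\omega)|\,d\tau\to\mathbb{E}|z^*|$ on a $\theta$-invariant full-measure set, which we absorb into $\bar\Omega$. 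The main technical hurdle is making the mild-solution identity rigorous; interpreting $z^*$ as a Wiener integral via integration by parts is what turns the SDE verification into a routine check, and the pathwise estimate $|\omega(t)|=o(|t|)$ is the single geometric input that drives every step.
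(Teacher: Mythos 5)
The paper does not actually prove Lemma~\ref{lx1}; it cites \cite{CaKloSchm04} and uses the result as a black box. Your argument reconstructs the standard proof that appears in that reference, and it is correct: the law of the iterated logarithm gives the $\theta$-invariant full-measure set with $|\omega(t)|=o(|t|)$, integration by parts identifies $z^*$ as the Wiener integral $\int_{-\infty}^0 e^{\nu\tau}\,dw(\tau)$ (hence Gaussian with finite first moment), the change of variable $\sigma=t+\tau$ plus one more integration by parts yields the Ornstein--Uhlenbeck variation-of-constants identity $z^*(\theta_t\omega)=e^{-\nu t}z^*(\omega)+\int_0^t e^{-\nu(t-\sigma)}\,dw(\sigma)$, the sublinear growth of $z^*(\theta_t\omega)$ follows from the $o(|t|)$ bound via dominated convergence, and the two averaged limits follow from Birkhoff applied to $z^*$ and $|z^*|$ in both time directions. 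This is essentially the same route the cited reference takes, so there is nothing to flag; you have simply made explicit what the paper delegates to a citation.
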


Let $\nu_{1},\cdots,\nu_{N}$ be a set of positive numbers. For any
pair $\nu_{j},w_{j}$ we have a stationary Ornstein-Uhlenbeck
process generated by a random variable $z_{j}^{\ast}(\omega)$ on
$\bar\Omega_{j}$ with properties formulated in Lemma \ref{lx1}
defined on the metric dynamical system
$(\bar\Omega_{j},\mathcal{F}_{j},\mathbb{P}_{j},\theta)$. We set
\begin{equation}
\label{eqx31}(\Omega,\mathcal{F},\mathbb{P},\theta),
\end{equation}
where
\[
\Omega=\bar\Omega_{1}\times\cdots\times\bar\Omega_{N},\quad\mathcal{F}%
=\bigotimes_{i=1}^{N}\mathcal{F}_{i},\quad\mathbb{P}=\mathbb{P}_{1}%
\times\mathbb{P}_{2}\times\cdots\times\mathbb{P}_{N},
\]
and $\theta$ is the flow of Wiener shifts. \newline

To find random fixed points for (\ref{seq41}) we will transform
this equation into an evolution equation with random coefficients
but without white noise. Let
\[
T(\omega):=S_{D_{1}}(z_{1}^{\ast}(\omega))\circ\cdots\circ S_{D_{N}}%
(z_{N}^{\ast}(\omega))
\]
be a family of random linear homeomorphisms on $H$. The inverse
operator is well defined by
\[
T^{-1}(\omega):=S_{D_{N}}(-z_{N}^{\ast}(\omega))\circ\cdots\circ S_{D_{1}%
}(-z_{1}^{\ast}(\omega))
\]
Because of the estimate
\[
\Vert T^{-1}(\omega)\Vert\leq e^{\Vert D_{1}\Vert|z_{1}^{\ast}(\omega)|}%
\cdot\ldots\cdot e^{\Vert D_{N}\Vert|z_{N}^{\ast}(\omega)|}%
\]
and the properties of the Ornstein-Uhlenbeck processes, it follows
that $\|T(\theta_t\omega)\|$, $\Vert
T^{-1}(\theta_{t}\omega)\Vert$ has sub-exponential growth as
$t\rightarrow\pm\infty$ for any $\omega\in\Omega$. Hence $\|T\|$,
$\Vert T^{-1}\Vert$ are tempered. On the other hand, since
$z_{j}^{\ast },\,j=1,\cdots,N$ are independent Gaussian random
variables we have that
\[
\prod_{j=1}^{N}\mathbb{E}(\Vert S_{D_{j}}(-z_{j}^{\ast})\Vert\Vert S_{D_{j}%
}(z_{j}^{\ast})\Vert)<\infty.
\]
Hence by the ergodic theorem we still have a $\{\theta_{t}\}_{t\in\mathbb{R}}%
$-invariant set $\bar{\Omega}\in\mathcal{F}$ of full measure such
that
\[%
\begin{split}
\lim_{t\rightarrow\pm\infty}\frac{1}{t}\int_{0}^{t}\Vert T(\theta_{\tau}%
\omega)\Vert\Vert T^{-1}(\theta_{\tau}\omega)\Vert d\tau &
=\mathbb{E}\Vert
T\Vert\Vert T^{-1}\Vert\\
&  \leq\prod_{j=1}^{N}\mathbb{E}(\Vert
S_{D_{j}}(-z_{j}^{\ast})\Vert\Vert S_{D_{j}}(z_{j}^{\ast})\Vert).
\end{split}
\]
We can change our metric dynamical system with respect to
$\bar{\Omega}$. However the new metric dynamical system will be
denoted by the old symbols
$(\Omega,\mathcal{F},\mathbb{P},\theta)$.\newline\newline

We formulate an evolution equation with random coefficients but
without white noise
\begin{equation}
\label{eqy1}\frac{d\psi}{dt}+\left(
A-\sum_{j=1}^{N}\nu_{j}z_{j}^{\ast }(\theta_{t}\omega)D_{j}\right)
\psi=T^{-1}(\theta_{t}\omega)f(T(\theta _{t}\omega)\psi),
\end{equation}
and initial condition $\psi(0)=x\in H$.

\begin{lemma}\label{lm2}
Suppose that $A,\,D_{1},\cdots,D_{N}$ satisfy the preceding
assumptions. Then
\newline i) the random evolution equation (\ref{eqy1}) possesses a unique
solution, and this solution generates a random dynamical system.
\newline ii) if $\psi$ is the random dynamical system in i),
\begin{equation}
\phi(t,\omega,x)=T(\theta_{t}\omega)\psi(t,\omega,T^{-1}(\omega)x)
\label{eqy3}%
\end{equation}
is another random dynamical system for which the process
\[
(\omega,t)\rightarrow\phi(t,\omega,x)
\]
solves (\ref{seq41}) for any initial condition $x\in H$.
\end{lemma}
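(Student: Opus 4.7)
I would prove Lemma \ref{lm2} by applying Theorem \ref{cocycle} to the random evolution equation \eqref{eqy1} for part (i), and then using Lemma \ref{lm1} together with a Stratonovich chain-rule calculation for part (ii).

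For part (i), the plan is to identify the random linear operator as
\[
A(\theta_t\omega) := A - \sum_{j=1}^N \nu_j\, z_j^*(\theta_t\omega)\, D_j
\]
and verify that it fits the framework of Theorems \ref{t1} and \ref{cocycle}. Since $-A$ generates an analytic $C_0$-semigroup and the $D_j \in L(H)$ are bounded (and commute with $A$), $-A(\theta_t\omega)$ is a bounded perturbation of $-A$ and hence also generates an analytic $C_0$-semigroup for every $\omega,t$. The compact embedding $H_1 \hookrightarrow H$ follows from compactness of $S_A(t)$, $t>0$. The map $t\mapsto A(\theta_t\omega)$ is H\"older (in fact continuous) in $L(H_1,H)$ because each $t\mapsto z_j^*(\theta_t\omega)$ is continuous by Lemma \ref{lx1}; strong measurability and the resolvent set condition are immediate after shifting by a sufficiently large $k_1(\omega)$. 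Thus Theorem \ref{t1} yields a linear RDS $U(t,\omega)$ for the linear part of \eqref{eqy1}.

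For the nonlinear part, set $F(\omega,\psi) := T^{-1}(\omega)\,f(T(\omega)\psi)$. Since $f$ is globally Lipschitz and $T(\omega),T^{-1}(\omega)$ are bounded linear operators,
\[
\|F(\omega,\psi_1)-F(\omega,\psi_2)\| \le L_f\,\|T^{-1}(\omega)\|\,\|T(\omega)\|\,\|\psi_1-\psi_2\|,
\]
so the Lipschitz constant is $L(\omega) = L_f\|T^{-1}(\omega)\|\|T(\omega)\|$. The integrability $\int_a^b L(\theta_s\omega)\,ds<\infty$ on bounded intervals follows from the estimate $\|T^{\pm 1}(\theta_s\omega)\| \le \exp\!\bigl(\sum_j \|D_j\|\,|z_j^*(\theta_s\omega)|\bigr)$ together with the continuity of the OU processes $z_j^*(\theta_s\omega)$. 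Measurability of $F(\cdot,\psi)$ is inherited from the Ornstein-Uhlenbeck random variables and from $f$. Theorem \ref{cocycle} then furnishes a unique (mild) solution $\psi(t,\omega,x)$ of \eqref{eqy1} on any interval, and this solution mapping is a continuous random dynamical system.

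For part (ii), the mapping $T:\Omega\times H\to H$ is a linear homeomorphism for every $\omega$, with both $T(\omega)$ and $T^{-1}(\omega)$ depending measurably on $\omega$ (they are compositions of the continuous families $S_{D_j}$ evaluated at the measurable random variables $z_j^*$). Hence Lemma \ref{lm1} applies directly and shows that
\[
\phi(t,\omega,x) := T(\theta_t\omega)\,\psi\bigl(t,\omega,T(\omega)^{-1}x\bigr)
\]
is again a continuous random dynamical system on $H$, with the correct initial value $\phi(0,\omega,x)=T(\omega)T(\omega)^{-1}x=x$. It remains to show that $\phi$ actually solves \eqref{seq41}. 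The key computation is a Stratonovich chain rule: because $A,D_1,\ldots,D_N$ mutually commute, so do $S_A(t)$ and every $S_{D_j}(s)$, and in particular $T(\theta_t\omega)$ commutes with $A$ and with each $D_j$. Writing $T(\theta_t\omega)=\prod_j S_{D_j}(z_j^*(\theta_t\omega))$ and using $dz_j^*(\theta_t\omega)=-\nu_j z_j^*(\theta_t\omega)\,dt+dw_j$, one obtains
\[
dT(\theta_t\omega) = \sum_{j=1}^N D_j\,T(\theta_t\omega)\circ dz_j^*(\theta_t\omega) = \sum_{j=1}^N D_j\,T(\theta_t\omega)\bigl(-\nu_j z_j^*(\theta_t\omega)\,dt+dw_j\bigr).
\]
Substituting this together with \eqref{eqy1} into $d\phi = (dT)\psi + T\,d\psi$, the two $\sum_j \nu_j z_j^* D_j \phi\,dt$ terms cancel, the term $T A\psi$ equals $A\phi$ by commutativity, and the term $T T^{-1} f(T\psi)=f(\phi)$ remains, yielding
\[
d\phi + A\phi\,dt = f(\phi)\,dt + \sum_{j=1}^N D_j\phi\circ dw_j,
\]
which is exactly \eqref{seq41}. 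The main technical obstacle is this last Stratonovich calculation: one must justify the chain rule rigorously in the infinite-dimensional setting (for instance by first differentiating along smooth approximations of the paths of $w_j$ and passing to the limit, or by invoking the commutativity to reduce to the finite-dimensional exponential of the generators), and carefully exploit the commutativity hypothesis so that all the unwanted cross terms drop out.
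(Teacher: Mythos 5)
Your approach is the natural one and is essentially what the paper intends, since the paper itself states Lemma \ref{lm2} without an explicit proof and relies on the general machinery developed earlier (Theorems \ref{t1} and \ref{cocycle}, Lemma \ref{lm1}). Your verification of the Lipschitz estimate $L(\omega)=L_f\|T^{-1}(\omega)\|\,\|T(\omega)\|$ and its local integrability from the sub-exponential growth of $\|T^{\pm 1}(\theta_s\omega)\|$, and your Stratonovich chain-rule computation exploiting the commutativity of $A,D_1,\dots,D_N$ so that the $\sum_j\nu_j z_j^* D_j\phi\,dt$ terms cancel and $TA\psi=A\phi$, are all correct and match what the paper would do.

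One small gap in part (i): to invoke Theorem \ref{t1} you need $t\mapsto A(\theta_t\omega)$ to be \emph{H\"older} continuous in $L(H_1,H)$, but you only cite Lemma \ref{lx1}, which gives continuity of $z_j^*(\theta_t\omega)$ — and your parenthetical ``H\"older (in fact continuous)'' has the implication backwards, since H\"older is the stronger requirement. The OU process does have H\"older paths (the stationary part $-\nu\int_{-\infty}^0 e^{\nu\tau}\omega(t+\tau)\,d\tau$ is $C^1$ and the remaining $\omega(t)$ inherits the Wiener path regularity), so the hypothesis can be verified, but you must say so. Alternatively — and this is in fact what the paper does a few lines later in Section \ref{appl} — one can bypass Amann's parabolic theory entirely: because the operators commute, one writes down the fundamental solution explicitly as $U(t,\omega)=S_A(t)\exp\{\int_0^t C(\theta_s\omega)\,ds\}$, verifies by direct differentiation that it solves the linear equation, and notes that compactness of $S_A(t)$ gives compactness of $U(t,\omega)$. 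This explicit route avoids the H\"older issue altogether and is cleaner in this special commuting setting; it is worth noting as the more efficient argument for the linear part, with Theorem \ref{cocycle} then still used for the full nonlinear cocycle as you describe.
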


\bigskip

From now on, we work with the random partial differential equation
(\ref{eqy1}) which has been obtained (by conjugation) from our
original stochastic PDE. To set our problem in the framework
previously developed, we
denote%
\[
C(\omega)=\sum_{j=1}^{N}\nu_{j}z_{j}^{\ast}(\omega)D_{j},\text{
\ \ \ }A(\omega)=A-C(\omega),\text{ \ \ \ }F(\omega,\cdot)=T^{-1}%
(\omega)f(T(\omega)\cdot).
\]
Note that \ $F(\omega,\cdot)$ is also Lipschitz continuous. The
Lipschitz constant $L$ is locally integrable in the sense of
Theorem \ref{cocycle}.\\

In order to prove the existence of invariant (stable and unstable)
manifolds, we need to check that assumptions in Theorems
\ref{thm3.1} and \ref{thm3.2} are fulfilled. To this end, we first
need to work with the linear part of the RPDE and prove that the
solution operator $U(t,\omega)$ generated by $A(\theta_{t}\omega)$
is nonuniformly pseudo-hyperbolic, what is immediately implied by
the MET (Theorem \ref{t3}). So, it is sufficient to prove the
integrability condition (\ref{eq21}) in that theorem.

Indeed, we define $U(t,\omega)$ by%
\[
U(t,\omega)=S_A(t)\exp\left\{
\int_{0}^{t}C(\theta_{s}\omega)\text{ d}s\right\}  .
\]
Then, defining, for fixed $\omega\in\Omega,$ $u\in H,$ the function%
\[
v(t)=U(t,\omega)u,
\]
and thanks to the commutativity properties of the operators, we
have,
\begin{align*}
\frac{{d}}{{d}t}v(t)  &  =-AS_A(t)\exp\left\{ \int_{0}^{t}C(\theta
_{s}\omega)\text{ d}s\right\} u\\
&+S_A(t)\exp\left\{ \int_{0}^{t}C(\theta
_{s}\omega)\text{ d}s\right\}  C(\theta_{t}\omega)u\\
&  =-AU(t,\omega)u+C(\theta_{t}\omega)S_A(t)\exp\left\{
\int_{0}^{t}C(\theta
_{s}\omega)\text{ d}s\right\}  u\\
&  =-A(\theta_{t}\omega)v(t),
\end{align*}
therefore, $U(t,\omega)$ is the fundamental solution for the linear problem%
\[
\frac{{d}}{{d}t}v(t)+A(\theta_{t}\omega)v(t)=0.
\]

Observe that the compactness of $S_A(t)$ and the commutativity
property implies that $U(t,\omega)$ is also compact.

Let us now prove that assumption (\ref{eq21})is satisfied. Indeed,
take
$t_{1},t_{2}\in\lbrack0,1],$ then%
\begin{align*}
||U(t_{1},\theta_{t_{2}}\omega)|| & \leq||S_A(t_{1})||\left\Vert
\exp\left\{ \int_{0}^{t_{1}}C(\theta_{t_{2}+s}\omega)\text{
d}s\right\}
\right\Vert \\
&  \leq||S_A(t_{1})||\exp\left\{  \int_{0}^{t_{1}}\left\Vert
C(\theta_{t_{2}+s}\omega)\right\Vert \text{ d}s\right\}  ,
\end{align*}
and%
\begin{align*}
\log^{+}||U(t_{1},\theta_{t_{2}}\omega)|| &  \leq\log^{+}%
||S_A(t_{1})||+\int_{0}^{t_{1}}\left\Vert C(\theta_{t_{2}+s}%
\omega)\right\Vert \text{ d}s\\
&  \leq\mu_{1}t_{1}+\int_{t_{2}}^{t_{1}+t_{2}}\left\Vert C(\theta_{s}%
\omega)\right\Vert \text{ d}s\\
&  \leq|\mu_{1}|+\int_{0}^{2}\left\Vert
C(\theta_{s}\omega)\right\Vert \text{ d}s.
\end{align*}
Therefore,%
\[
E\left(  \sup_{t_{1},t_{2}\in\lbrack0,1]}\log^{+}||U(t_{1},\theta_{t_{2}%
}\omega)||\right)  \leq|\mu_{1}|+\int_{0}^{2}E\left\Vert C(\theta
_{s}\omega)\right\Vert \text{ d}s<+\infty
\]
thanks to the properties of the Ornstein-Uhlenbeck processes.

\bigskip
Hence we can apply Theorem \ref{t4} to find the existence of
tempered random variables $K_\lambda^s,\,1/(K_\lambda^u)$ such
that $K=K_\lambda^s+1/(K_\lambda^u)$. For the sake of
completeness, we will explicitly determine the Lyapunov exponents
of $U$ as well as $\alpha(\omega),\beta (\omega),\,K(\omega)$ in
(\ref{hstable})--(\ref{hunstable}). First, we will prove that
\[
\lim_{t\rightarrow+\infty}\frac{1}{t}\log||U(t,\omega)u||\neq
-\infty,\text{ \ \ \ for all \ }u\in H.
\]
This fact implies that there exists infinitely many Lyapunov
exponents.

Choose an eigenvector $e_{j}$ of the operator $A$ associated to
the eigenvalue $\mu_{j.}$ Then,
\begin{align*}
\lim_{t\rightarrow+\infty}\frac{1}{t}\log||U(t,\omega)e_{j}|| &
=\lim_{t\rightarrow+\infty}\frac{1}{t}\log\text{e}^{\mu_{j}t}\left\Vert
\exp\left\{  \int_{0}^{t}C(\theta_{s}\omega)\text{ d}s\right\}  e_{j}%
\right\Vert \\
&  =\mu_{j}+\lim_{t\rightarrow+\infty}\frac{1}{t}\log\left\Vert
\exp\left\{
\int_{0}^{t}C(\theta_{s}\omega)\text{ d}s\right\}  e_{j}\right\Vert \\
&  =\mu_{j,}%
\end{align*}

since, by the ergodic theorem, and the following inequalities

\begin{align*}
    e^{-\big\|\int_0^t C(\theta_s)ds\big\|}&\le\big\|e^{\int_0^t
    C(\theta_s)ds}\big\|\le e^{\big\|\int_0^t
    C(\theta_s)ds\big\|}\\
    &\frac{1}{\big\|e^{-\int_0^t
    C(\theta_s)ds}\big\|}\le \big\|e^{\int_0^t
    C(\theta_s)ds} e_j\big\|
\end{align*}

it easily follows that

\begin{equation*}
    \lim_{t\to\infty}\frac{1}{t}\log\big\|\exp\big(\int_0^tC(\theta_s\omega)ds\big)\big\|=0,
\end{equation*}

and, as a consequence,

\begin{align*}
    0=&\lim_{t\to\infty}\frac{1}{t}\log\big\|\exp\big(-\int_0^tC(\theta_s\omega)ds\big)\big\|
    \le
    \lim_{t\to\infty}\frac{1}{t}\log\big\|\exp\big(\int_0^tC(\theta_s\omega)ds\big)e_j\big\|\\
    \le&
    \lim_{t\to\infty}\frac{1}{t}\log\big\|\exp\big(\int_0^tC(\theta_s\omega)ds\big)\big\|=0,
\end{align*}

so that the Lyapunov exponents $\lambda_j$ for the random
dynamical system $U$ are equal to the eigenvalues $\mu_j$. As for
the associated space $V_j$ it is easy to check that

\begin{equation*}
    V_j=\bigoplus_{i=j}^\infty F_i
\end{equation*}

where $F_i$ are the eigenspaces associated to  $\mu_j$.

\bigskip Let us now determine $\alpha(\omega),\beta(\omega)$ \ and
\ $k(\omega)$ satisfying relations
(\ref{hstable})--(\ref{hunstable}). To this end, let us
denote by $\mu_{s}$ and $\mu_{u}$ the consecutive eigenvalues which satisfy%
\[
\mu_{s}=\mu_{j+1}<0<\mu_{u}=\mu_{j.}%
\]
Then,%
\[
\left\Vert \exp\left\{  \int_{0}^{t}C(\theta_{s}\omega)\text{
d}s\right\}
S_A(t)\Pi^s\right\Vert \leq\text{e}^{\mu_{s}t}\exp\left\Vert \int_{0}%
^{t}C(\theta_{s}\omega)\text{ d}s\right\Vert
\]
and we observe that%
\begin{align*}
\exp\left\Vert \int_{0}^{t}C(\theta_{s}\omega)\text{
d}s\right\Vert  & =\exp\left\Vert
\sum_{j=1}^{N}\nu_{j}\int_{0}^{t}z_{j}^{\ast}(\theta
_{s}\omega)D_{j}\text{ d}s\right\Vert \\
& \leq\exp\left\{  \sum_{j=1}^{N}|\nu_{j}|\left\Vert
D_{j}\right\Vert \left\vert
\int_{0}^{t}z_{j}^{\ast}(\theta_{s}\omega)\text{ d}s\right\vert
\right\}  .
\end{align*}
As%
\[
\lim_{t\rightarrow+\infty}\frac{1}{t}\int_{0}^{t}z_{j}^{\ast}(\theta_{s}%
\omega)\text{ d}s=0,
\]
then for a given $\varepsilon>0,$ there exists $T(\varepsilon)>0$
such that
\[
\left\vert \int_{0}^{t}z_{j}^{\ast}(\theta_{s}\omega)\text{
d}s\right\vert \leq\frac{\varepsilon}{\delta}t,\text{ \ \ for all
\ }t\geq T(\varepsilon ),\text{ \ and all \ \
}j=1,2,\cdot\cdot\cdot,N,
\]
where $\delta=\sum_{j=1}^{N}|\nu_{j}|\left\Vert D_{j}\right\Vert
.$

Thus,%
\[
\exp\left\{  \sum_{j=1}^{N}|\nu_{j}|\left\Vert D_{j}\right\Vert
\left\vert \int_{0}^{t}z_{j}^{\ast}(\theta_{s}\omega)\text{
d}s\right\vert \right\} \leq\text{e}^{\varepsilon t},\text{ \ \
for \ all \ }t\geq T(\varepsilon).
\]
On the other hand, for $t\in\lbrack0,T(\varepsilon))\,$\ we have%
\[
\exp\left\{  \sum_{j=1}^{N}|\nu_{j}|\left\Vert D_{j}\right\Vert
\left\vert \int_{0}^{t}z_{j}^{\ast}(\theta_{s}\omega)\text{
d}s\right\vert \right\} \leq\exp\left\{
\sum_{j=1}^{N}|\nu_{j}|\left\Vert D_{j}\right\Vert
\max_{r\in\lbrack0,T(\varepsilon)]}\left\vert \int_{0}^{r}z_{j}^{\ast}%
(\theta_{s}\omega)\text{ d}s\right\vert \right\}  ,
\]
whence%
\[
\exp\left\{  \sum_{j=1}^{N}|\nu_{j}|\left\Vert D_{j}\right\Vert
\left\vert \int_{0}^{t}z_{j}^{\ast}(\theta_{s}\omega)\text{
d}s\right\vert \right\} \leq\exp\left\{
\sum_{j=1}^{N}|\nu_{j}|\left\Vert D_{j}\right\Vert
\max_{r\in\lbrack0,T(\varepsilon)]}\left\vert \int_{0}^{r}z_{j}^{\ast}%
(\theta_{s}\omega)\text{ d}s\right\vert \right\}
\text{e}^{\varepsilon t},
\]
for all $t\geq0$, and, finally,%
\[
\left\Vert \exp\left\{  \int_{0}^{t}C(\theta_{s}\omega)\text{
d}s\right\} S_A(t)\Pi^s\right\Vert \leq\text{e}^{\left(
\mu_{s}+\varepsilon\right) t}K(\omega),
\]
where%
\[
K(\omega)=\prod\limits_{j=1}^{N}\underset{=K_{j}(\omega)}{\underbrace
{\exp\left\{  |\nu_{j}|\left\Vert D_{j}\right\Vert \max_{r\in
\lbrack0,T(\varepsilon)]}\left\vert \int_{0}^{r}z_{j}^{\ast}(\theta_{s}%
\omega)\text{ d}s\right\vert \right\}  }}.
\]
It is clear that $\beta(\omega)=\mu_{s}+\varepsilon$, and we need
to prove that $K(\omega)$ is tempered. For this, it is enough to
prove that each
$K_{j}(\omega)$ is tempered. Indeed, observe that%
\begin{align*}
0  & \leq\frac{1}{t}\log^{+}K_{j}(\theta_{t}\omega)\\
& =\frac{1}{t}|\nu_{j}|\left\Vert D_{j}\right\Vert \max_{r\in
\lbrack0,T(\varepsilon)]}\left\vert
\int_{0}^{r}z_{j}^{\ast}(\theta
_{s+t}\omega)\text{ d}s\right\vert \\
& \leq|\nu_{j}|\left\Vert D_{j}\right\Vert \frac{1}{t}\max_{r\in
\lbrack0,T(\varepsilon)]}\int_{0}^{r}\left\vert
z_{j}^{\ast}(\theta
_{s+t}\omega)\right\vert \text{ d}s\\
& \leq|\nu_{j}|\left\Vert D_{j}\right\Vert \frac{1}{t}\int_{0}%
^{T(\varepsilon)}\left\vert
z_{j}^{\ast}(\theta_{s+t}\omega)\right\vert \text{
d}s\\
& \leq|\nu_{j}|\left\Vert D_{j}\right\Vert \frac{1}{t}\int_{t}%
^{t+T(\varepsilon)}\left\vert
z_{j}^{\ast}(\theta_{s}\omega)\right\vert \text{
d}s\\
& \leq|\nu_{j}|\left\Vert D_{j}\right\Vert \left(
\underset{\rightarrow
1}{\underbrace{\frac{t+T(\varepsilon)}{t}}}\cdot\underset{\rightarrow
E|z_{j}^{\ast}|}{\underbrace{\frac{1}{t+T(\varepsilon)}\int_{0}%
^{t+T(\varepsilon)}\left\vert
z_{j}^{\ast}(\theta_{s}\omega)\right\vert \text{
d}s}}-\underset{\rightarrow
E|z_{j}^{\ast}|}{\underbrace{\frac{1}{t}\int
_{0}^{t}\left\vert z_{j}^{\ast}(\theta_{s}\omega)\right\vert \text{ d}s}%
}\right)  \rightarrow0\,
\end{align*}
as $t\to\infty$. So, $K(\omega)$ is tempered. A similar analysis
can be carried out to determine that
$\alpha(\omega)=\mu_{u}-\varepsilon.$

Therefore, as the nonlinear term $F$ is globally Lipschitz we can
take $B_{1}=L_{f}$ and assumptions in theorems \ref{thm3.1} and
\ref{thm3.2} are fulfilled. We thus have existence of
pseudo-unstable and pseudo-stable manifolds.

\bigskip

\textbf{Acknowledgements.} This work was started in the summer
2003 when the authors participated in a Research in Teams Program,
  supported by the
Banff International Research Station (Banff, Alberta, Canada).

\end{document}